\newcounter{itemcounter}
\numberwithin{itemcounter}{section}
\newtheorem{thm}[itemcounter]{Theorem}
\newtheorem{lem}[itemcounter]{Lemma}
\newtheorem{defi}[itemcounter]{Definition}
\newtheorem{prop}[itemcounter]{Proposition}
\newtheorem{cor}[itemcounter]{Corollary}
\newtheorem{con}[itemcounter]{Conjecture}
\newtheorem{rem}[itemcounter]{Remark}
\newtheorem*{thm*}{Theorem}
\newtheorem*{con*}{Conjecture}
\newtheorem*{cor*}{Corollary}
\newtheorem*{ack*}{Acknowledgements}
\newcommand{\Irr}{\mathop{\rm Irr}\nolimits}
\newcommand{\Tr}{\mathop{\rm Tr}\nolimits}
\newcommand{\Hom}{\mathop{\rm Hom}\nolimits}
\newcommand{\Out}{\mathop{\rm Out}\nolimits}
\newcommand{\LL}{\mathop{\rm LL}\nolimits}
\newcommand{\rad}{\mathop{\rm rad}\nolimits}
\newcommand{\OF}{\mathop{\rm f_\mathcal{O}}\nolimits}
\newcommand{\SOF}{\mathop{\rm sf_\mathcal{O}}\nolimits}
\newcommand{\mf}{\mathop{\rm mf}\nolimits}
\newcommand{\mfO}{\mathop{\rm mf_\mathcal{O}}\nolimits}
\newcommand{\nth}{\mathop{\rm th}\nolimits}
\newcommand{\GL}{\operatorname{GL}}
\newcommand{\Gal}{\operatorname{Gal}}
\newcommand{\length}{\operatorname{length}}
\newcommand{\cX} {\mathcal{X}}
\newcommand{\cE} {\mathcal{E}}
\newcommand{\cO} {\mathcal{O}}
\newcommand{\NN} {\mathbb{N}}
\newcommand{\ZZ}{\mathbb Z}
\newcommand{\QQ}{\mathbb Q}
\newcommand{\CC}{\mathbb C}
\newcommand{\FF}{\mathbb F}
\def\bigtimes{\mathop{\mathchoice 
 {\hbox{\sf\Large\lower 0.1\baselineskip\hbox{X}}}%
 {\hbox{\sf\large\lower 0.1\baselineskip\hbox{X}}}%
 {\hbox{\sf\normalsize\lower 0.1\baselineskip\hbox{X}}}%
 {\hbox{\sf\tiny\lower 0.1\baselineskip\hbox{X}}}%
}}
\begin{document}

\title{Donovan's conjecture, blocks with abelian defect groups and discrete valuation rings \footnote{This research was supported by the EPSRC (grant nos. EP/M015548/1 and EP/M02525X/1).} }

\author{Charles W. Eaton\footnote{School of Mathematics, University of Manchester, Manchester, M13 9PL, United Kingdom. Email: charles.eaton@manchester.ac.uk}, Florian Eisele\footnote{School of Mathematics and Statistics, University of Glasgow, University Place, Glasgow, G12 8QQ, United Kingdom. Email: florian.eisele@glasgow.ac.uk} and Michael Livesey\footnote{Friedrich-Schiller-Universit\"{a}t Jena, Fakult\"{a}t f\"{u}r Mathematik und Informatik, Institut f\"{u}r Mathematik, 07737 Jena, Germany. Email: michael.livesey@uni-jena.de}}

\date{21st May, 2019}
\maketitle


\begin{abstract}
We give a reduction to quasisimple groups for Donovan's conjecture for blocks with abelian defect groups defined with respect to a suitable discrete valuation ring $\cO$. Consequences are that Donovan's conjecture holds for $\cO$-blocks with abelian defect groups for the prime two, and that, using recent work of Farrell and Kessar, for arbitrary primes Donovan's conjecture for $\cO$-blocks with abelian defect groups reduces to bounding the Cartan invariants of blocks of quasisimple groups in terms of the defect.

A result of independent interest is that in general (i.e. for arbitrary defect groups) Donovan's conjecture for $\cO$-blocks is a consequence of conjectures predicting bounds on the $\cO$-Frobenius number and on the Cartan invariants, as was proved by Kessar for blocks defined over an algebraically closed field.
\end{abstract}


\section{Introduction}


Let $p$ be a prime and let $k=\bar \FF_p$. Let $(K,\cO,k)$ be a $p$-modular system, so $\cO$ is a complete discrete valuation ring with residue field $k$. The results here hold in this general setting, but we have in mind for $\cO$ the ring of Witt vectors over $k$ as this will be used to state Donovan's conjecture in a uniform way. Donovan's conjecture, originally stated over an algebraically closed field, is as follows:

\begin{con}[Donovan]
\label{Donovan:conj}
Let $P$ be a finite $p$-group. Then amongst all finite groups $G$ and blocks $B$ of $\cO G$ with defect groups isomorphic to $P$ there are only finitely many Morita equivalence classes.
\end{con}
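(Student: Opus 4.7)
The plan is to approach this longstanding open conjecture via a reduction strategy, since a direct attack on all finite groups simultaneously is out of reach: I would reduce Donovan's conjecture for a general pair $(G,B)$ to a statement about blocks of quasisimple groups, where the classification of finite simple groups is available, and then verify the reduced statement family by family. The correct invariant to track throughout is the source algebra of $B$, since its isomorphism class determines the Morita equivalence class of $B$ and its structure is controlled by $P$ together with a fusion system on $P$; since there are only finitely many saturated fusion systems on a given $p$-group, it is enough to produce finitely many source algebras from each fusion system.

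For the reduction step, I would apply a sequence of standard block-theoretic reductions, each of which produces only finitely many Morita equivalence classes from a finite input and preserves or controls defect groups. These include a Fong--Reynolds reduction to the stabiliser of a covered block of $O_{p'}(G)$; factoring out central $p'$-subgroups to assume $O_{p'}(G)\le Z(G)$; and analysing the generalised Fitting subgroup $F^*(G)$ to reduce to the situation where $G$ is generated by a defect group $P$ together with the layer $E(G)$, a central product of quasisimple components on which $P$ acts. Combined with the finiteness of fusion systems on $P$ and careful control of the Clifford-theoretic data, this channels the problem into the quasisimple setting with defect bounded in terms of $P$.

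For the quasisimple step, the classification partitions the work into the alternating, sporadic, and finite-groups-of-Lie-type (in both defining and non-defining characteristic) cases. By Kessar's theorem mentioned in the abstract, finiteness of Morita equivalence classes of $\cO$-blocks with a given defect is equivalent to uniform bounds, in terms of the defect group, on the number of simple modules, on the Cartan invariants, and on the $\cO$-Frobenius number. For quasisimple groups the count of simple modules is controlled by the known ordinary character theory, and the $\cO$-Frobenius number is handled by the recent work of Farrell and Kessar cited in the abstract.

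The main obstacle is bounding the Cartan invariants of blocks of quasisimple groups in terms of the defect group: this is itself a deep open problem and is precisely what forces any current result to be conditional. For abelian defect groups and $p=2$ the required bounds are available through recent developments, so the reduction yields an unconditional statement in that case, but for odd primes the Cartan bounds for blocks of finite reductive groups in non-defining characteristic remain the fundamental barrier, and my proposed proof, like the reduction executed in the present paper, would terminate exactly at that point.
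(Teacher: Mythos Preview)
The statement under review is a \emph{conjecture}, not a theorem: the paper does not prove Donovan's conjecture in general, and neither does your proposal. What the paper actually accomplishes is a reduction for \emph{abelian} defect groups, together with an unconditional result for abelian $2$-groups. Your proposal is best read as an outline of a research programme rather than a proof, and as such it is broadly aligned with the paper's approach --- but it skips over precisely the technical content that constitutes the paper's contribution.

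The most significant gap is your appeal to ``Kessar's theorem mentioned in the abstract'' as if it were available over~$\cO$. Kessar's result that Donovan's conjecture is equivalent to the conjunction of weak Donovan and a Morita--Frobenius bound was only known over~$k$. Establishing the analogue over~$\cO$ is the main technical work of the paper (Section~3): one proves a version of Lang's theorem for $\GL_m(\cO)$ by successive approximation, uses it to descend a block with $\Lambda\cong\Lambda^{(\sigma)}$ to an $\cO_0$-form over a finite extension of~$\QQ_p$, and then argues finiteness by bounding the index of a symmetric order in a maximal order in terms of the defect. None of this is in your sketch, and without it the $\cO$-version of the Kessar splitting is simply not available to you. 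Similarly, the reduction step ``from $G$ down to the subgroup generated by defect groups'' over~$\cO$ requires the second author's recent result (Theorem~4.1), which your proposal does not flag as a non-trivial ingredient.

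Two further points. First, the paper's reductions (Proposition~4.3 and Lemma~4.4) genuinely require the defect group to be \emph{abelian}: this is used to show $O_p(H)\leq Z(H)$, to control $G[kb]$, and to run the component analysis via the $D\cap L_j$ argument. Your outline reads as though the reduction to quasisimple groups goes through for arbitrary~$P$, which is not what the paper establishes. Second, your remarks about tracking the source algebra and counting fusion systems are not used anywhere in the paper's argument and do not by themselves yield finiteness; the paper works instead with basic algebras, symmetrising forms, and the defect invariant of a symmetric order.
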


For blocks defined over an algebraically closed field there has been some success in proving the conjecture for certain $p$-groups, for example it is known for abelian $2$-groups by~\cite{el18b} and for abelian defect groups in arbitrary characteristic it reduces to bounding the Cartan invariants of blocks of quasisimple groups by~\cite{fk18} and~\cite{el18b}. In addition blocks with defect groups isomorphic to dihedral or semidihedral $2$-groups were classified in~\cite{er90}. However, we ultimately want to understand blocks defined over $\cO$. Two of the main obstacles to working over $\mathcal{O}$ rather than $k$ are as follows. The first is that the crucial reduction step in~\cite{ku95}, allowing us to reduce to studying groups generated by the defect groups, was only known over a field. The second is that the reduction in~\cite{ke04} of Donovan's conjecture into two distinct conjectures was also only known over a field. The first problem was overcome by the second author in~\cite{eis18}, and we resolve the second here, allowing us to reduce Donovan's conjecture for $\cO$-blocks with abelian defect groups to bounding, for quasisimple groups, the Cartan invariants and strong Frobenius number as defined in~\cite{el18a}. The results of~\cite{fk18} show that the strong Frobenius numbers of quasisimple groups are bounded in terms of the defect group, so Donovan's conjecture for abelian defect groups in fact reduces to bounding Cartan invariants of blocks of quasisimple groups. Such bounds are known to hold for $2$-blocks with abelian defect groups.

Our main result is as follows:

\begin{thm}
\label{reduce:theorem}
Let $d$ be a non-negative integer. If there are functions $s,c:\mathbb{N}\to\mathbb{N}$ such that for all $\cO$-blocks $B$ of quasisimple groups with abelian defect groups of order $p^{d'}$ dividing $p^d$, $\SOF(B) \leq s(d')$ and all Cartan invariants are at most $c(d')$, then Donovan's conjecture holds for $\cO$-blocks with abelian defect groups of order $p^d$.
\end{thm}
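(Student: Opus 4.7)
The plan is to combine two reductions. The first is an $\cO$-analogue of Kessar's theorem~\cite{ke04}: Donovan's conjecture for $\cO$-blocks of defect $p^d$ follows provided that, across all such blocks $B$, both the Cartan invariants and the strong Frobenius number $\SOF(B)$ are bounded by a function of $d$. The second is the abelian-defect reduction to quasisimple groups in the spirit of~\cite{el18b}, now available over $\cO$ thanks to the source-algebra reduction of~\cite{eis18}; this propagates bounds on $\SOF$ and on Cartan invariants from quasisimple blocks to arbitrary blocks with abelian defect group of order $p^d$.

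For the first reduction, the strategy mirrors~\cite{ke04} but at the level of $\cO$-orders. Bounding the Cartan invariants forces the $\cO$-rank of the basic algebra of $B$ to be bounded, so the isomorphism types of these basic algebras are parametrised by an affine scheme of bounded dimension. Over $k$ this would yield finiteness up to isomorphism after sufficiently many Frobenius twists; over $\cO$ one uses instead that $\SOF(B)$, as introduced in~\cite{el18a}, quantifies how much the Morita equivalence class of $B$ can change on passage to a finite unramified extension of $\cO$. Bounding $\SOF$ together with the Cartan invariants then forces finiteness of Morita equivalence classes of $\cO$-blocks of fixed defect.

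For the second reduction, by~\cite{eis18} any $\cO$-block with defect group $D$ of order $p^d$ is source-algebra equivalent to a block of a finite group generated by $G$-conjugates of $D$. Transcribing the arguments of~\cite{el18b} to the $\cO$-setting then expresses such a block, up to Morita equivalence, via tensor products and related constructions applied to blocks of quasisimple groups with defect groups of order dividing $p^d$. Since Cartan matrices multiply under tensor products, and since $\SOF$ of a product is controlled by the $\SOF$s of the factors (together with the elementary behaviour of $\SOF$ under source-algebra and Morita equivalence), the hypothesised bounds $s(d')$ and $c(d')$ on quasisimple blocks yield corresponding bounds on all $\cO$-blocks with abelian defect groups of order $p^d$. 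Invoking the first reduction then delivers Donovan's conjecture in this case.

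The main obstacle I expect is the first step, the $\cO$-analogue of Kessar's reduction, which the introduction identifies as the principal new contribution. Over $k$ one exploits both the dimension theory of the variety of algebra structures and the action of Frobenius twists; over $\cO$ the relevant moduli space is of $\cO$-orders rather than of finite-dimensional $k$-algebras, and the role of the Morita Frobenius number must be played by $\SOF$. Establishing that $\SOF$ has the required functoriality under Morita equivalence and tensor products, and that bounded $\SOF$ together with bounded Cartan invariants really does force finiteness of Morita classes over $\cO$, will be the technical heart of the argument.
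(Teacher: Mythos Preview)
Your high-level two-step plan matches the paper: an $\cO$-analogue of Kessar's reduction, combined with a propagation of bounds from quasisimple blocks. However, several of the mechanisms you describe are inaccurate in ways that would obstruct execution.

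First, Eisele's result in~\cite{eis18} is not a source-algebra equivalence. It is a \emph{finiteness} statement: given the Morita class of the block $C$ of $\cO H$ covered by $B$, where $H=\langle D^g : g\in G\rangle$, there are only finitely many possibilities for the Morita class of $B$. This matters for the logical structure. You cannot use it to transport a bound on $\SOF$ from $C$ back to $B$, so you cannot ``propagate bounds on $\SOF$ \ldots\ to arbitrary blocks'' in the way you describe. Instead, the paper first reduces Donovan's conjecture to the class of \emph{reduced pairs} $(G,B)$ (Proposition~\ref{duvel}), using Eisele's finiteness together with Fong--Reynolds and K\"ulshammer--Puig; only then does one need bounds on $\SOF$ and Cartan invariants, and only for reduced pairs. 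The $\SOF$ bound for reduced pairs is Lemma~\ref{SOF_for_reduced_pairs} (quoted from~\cite{el18b}); the Cartan bound comes from D\"uvel~\cite[Theorem~3.2]{du04} rather than from a tensor-product argument. Corollary~\ref{kessar_analogue} is then applied to the class of reduced pairs, not to all blocks.

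Second, your sketch of the $\cO$-Kessar argument does not reflect the actual proof. There is no variety of $\cO$-order structures and no argument about passage to unramified extensions. The key device is a version of Lang's theorem over $\cO$ (Proposition~\ref{prop lang over o}): if $\Lambda\cong\Lambda^{(\sigma)}$ for a lift $\sigma$ of a Frobenius power, then $\Lambda$ descends to an $\cO_0$-order $\Lambda_0$ where $\cO_0=\cO^{\langle\sigma\rangle}$ has \emph{finite} residue field (Theorem~\ref{thm ex o0 form}). One then uses the symmetrising form and the notion of defect of a symmetric order to bound the index of $\Lambda_0$ in a maximal order (Proposition~\ref{prop bound index}), and finishes by a literal count of suborders of bounded index in a maximal order over a DVR with finite residue field (Theorem~\ref{thm finite class}). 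The Cartan bound enters only to bound the $\cO$-rank of the basic order; the $\SOF$ (equivalently $\mfO$) bound enters only to fix a single $\sigma$ for the descent.
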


A straightforward consequence is that:

\begin{cor}
Donovan's conjecture (over $\mathcal{O}$) holds for blocks with abelian defect groups if and only if it holds for blocks of quasisimple groups with abelian defect groups.
\end{cor}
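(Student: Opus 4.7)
The forward direction is immediate, since quasisimple groups are in particular finite groups: any Morita equivalence class bound over all finite groups subsumes the one for quasisimple groups. So the content of the corollary is in the converse direction, and the plan is to deduce it by supplying the functions $s$ and $c$ demanded in the hypothesis of Theorem~\ref{reduce:theorem}. Fix $d \ge 0$; the goal is to produce bounds for $\cO$-blocks of quasisimple groups with abelian defect groups of order $p^{d'} \mid p^d$, after which Theorem~\ref{reduce:theorem} yields Donovan's conjecture for $\cO$-blocks with abelian defect groups of order $p^d$. Letting $d$ vary then gives the full statement.

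The bound $s$ is free: by the work of Farrell--Kessar~\cite{fk18} invoked in the introduction, the strong $\cO$-Frobenius numbers $\SOF(B)$ for $\cO$-blocks $B$ of quasisimple groups with abelian defect group of order $p^{d'}$ are already bounded by a function of $d'$ alone, independently of any hypothesis on Donovan's conjecture. So $s$ exists unconditionally.

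The bound $c$ is where Donovan's conjecture for quasisimple groups enters. There are only finitely many isomorphism types of abelian $p$-group of order dividing $p^d$ (indexed by partitions of integers $\le d$). For each such $P$, Donovan's conjecture for quasisimple groups with defect group isomorphic to $P$ leaves only finitely many Morita equivalence classes of $\cO$-blocks. The Cartan matrix of a block is a Morita invariant up to simultaneous permutation of rows and columns, so the multiset of Cartan invariants is constant on each Morita class; taking the maximum over the finitely many classes and the finitely many choices of $P$ produces the required $c(d')$.

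With these $s$ and $c$ in hand, Theorem~\ref{reduce:theorem} delivers Donovan's conjecture for $\cO$-blocks with abelian defect groups of order $p^d$, and then ranging over $d$ completes the argument. There is no substantive obstacle here: the corollary is essentially a bookkeeping consequence of Theorem~\ref{reduce:theorem} combined with the Morita-invariance of Cartan entries and the input from~\cite{fk18}. The only point worth flagging is the reliance on the fact that abelian $p$-groups of bounded order come in finitely many isomorphism types, so that the bound extracted from Donovan for quasisimple groups is genuinely uniform in the defect group of fixed order.
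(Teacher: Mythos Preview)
Your proof is correct, but it takes a slightly different route from the one the paper intends. Note that the paper states this corollary \emph{before} the corollary that invokes Farrell--Kessar, describing the latter as ``a much more powerful consequence''. So the paper's implicit argument does not use~\cite{fk18} at all: both $s$ and $c$ are meant to come directly from the Donovan hypothesis for quasisimple groups. Concretely, assuming Donovan for quasisimple groups, there are finitely many Morita classes of such blocks with a given abelian defect group, and since $\SOF$ is a Morita invariant by Proposition~\ref{morita_O_proposition}(ii), one obtains $s(d')$ in exactly the same way you obtained $c(d')$. This keeps the corollary a purely formal consequence of Theorem~\ref{reduce:theorem} together with Morita-invariance, whereas your version imports a deep classification-dependent result (\cite{fk18}) where none is needed. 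Your argument is not wrong---it just proves something slightly stronger than required and obscures the logical structure the paper is highlighting.
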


By work of Farrell and Kessar in~\cite{fk18}, we get a much more powerful consequence:

\begin{cor}
Let $d$ be a non-negative integer. If there is a function $c:\mathbb{N}\to\mathbb{N}$ such that for all $\cO$-blocks $B$ of quasisimple groups with abelian defect groups of order $p^{d'}$ dividing $p^d$ the Cartan invariants are at most $c(d')$, then Donovan's conjecture holds for $\cO$-blocks with abelian defect groups of order $p^d$.
\end{cor}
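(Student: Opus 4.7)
The plan is to deduce this directly from Theorem \ref{reduce:theorem} by verifying both of its hypotheses. Theorem \ref{reduce:theorem} requires two functions $s,c:\mathbb{N}\to\mathbb{N}$: one bounding the strong $\cO$-Frobenius number $\SOF(B)$ and one bounding the Cartan invariants of $\cO$-blocks $B$ of quasisimple groups with abelian defect groups of order $p^{d'}$ dividing $p^d$. The function $c$ is provided directly by the hypothesis of the corollary, so the only remaining task is to exhibit a suitable function $s$.

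For this bound, I would appeal to the work of Farrell and Kessar in \cite{fk18}, which was flagged in the introduction as giving exactly the needed input. Their results establish that, for $\cO$-blocks of quasisimple groups with abelian defect groups, the strong $\cO$-Frobenius number is bounded in terms of the order of the defect group. In particular, for each $d'\leq d$ one obtains a uniform bound on $\SOF(B)$ as $B$ ranges over such blocks with defect group of order $p^{d'}$, and assembling these bounds yields the required function $s:\mathbb{N}\to\mathbb{N}$.

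With both $s$ and $c$ now in hand, Theorem \ref{reduce:theorem} applies and immediately delivers Donovan's conjecture for $\cO$-blocks with abelian defect groups of order $p^d$. There is no substantive obstacle in the proof of the corollary itself; all of the serious work sits inside Theorem \ref{reduce:theorem} (the reduction, over $\cO$, to quasisimple groups) and inside \cite{fk18} (the $\cO$-Frobenius bound for quasisimple groups with abelian defect). The role of the corollary is simply to isolate bounding the Cartan invariants of blocks of quasisimple groups as the sole remaining ingredient needed to settle Donovan's conjecture for $\cO$-blocks with abelian defect groups of order $p^d$.
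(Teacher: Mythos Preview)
Your proposal is correct and follows exactly the paper's own argument: the corollary is deduced from Theorem~\ref{reduce:theorem} by taking the Cartan bound $c$ from the hypothesis and supplying the bound $s$ on $\SOF(B)$ for blocks of quasisimple groups via~\cite{fk18} (the paper records the explicit bound $\SOF(B)\leq 4|D|^2!$). The only point the paper adds which you do not mention is that~\cite{fk18} works over the ring of Witt vectors, and one passes to a larger $\cO$ via the inequality $\operatorname{sf}_{\cO'}(B\otimes_\cO \cO')\leq \SOF(B)$; this is a minor technicality and does not affect the substance of your argument.
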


Hence we have shown that for abelian $p$-groups Conjecture \ref{Donovan:conj} is equivalent to (the restriction to quasisimple groups of) the following apparently much weaker conjecture, which arose from a question of Brauer:

\begin{con}[Weak Donovan]
\label{Weak_Donovan:conj}
Let $P$ be a finite $p$-group. Then there is $c(P) \in \NN$ such that if $G$ is a finite group and $B$ is a block of $kG$ with defect groups isomorphic to $P$, then the entries of the Cartan matrix of $B$ are at most $c(P)$.
\end{con}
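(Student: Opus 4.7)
The statement as worded is a well-known open conjecture (arising, as the paper notes, from a question of Brauer), so I can only sketch a program rather than give a genuine proof. The plan is twofold: first, reduce the conjecture to blocks of quasisimple groups; second, attack it family by family via the classification of finite simple groups.

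For the reduction step, I would invoke the same machinery that underpins the reductions to quasisimple groups for Donovan's conjecture itself, namely K\"ulshammer's argument in~\cite{ku95} and its $\cO$-analogue~\cite{eis18}, together with the Clifford-theoretic and central-extension arguments used in Kessar~\cite{ke04} and in Theorem~\ref{reduce:theorem}. Each of these reductions either passes to a Morita equivalent block---whose Cartan matrix is preserved---or replaces the given block by one in a group generated by a defect group together with a central extension of a simple group, with Cartan entries controlled up to a factor depending only on $|P|$. A uniform bound $c(P)$ for blocks of quasisimple groups with defect group isomorphic to $P$ would therefore propagate to every block with defect group $P$.

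For the case-by-case analysis, the sporadic groups contribute only finitely many examples and are handled by direct inspection. For alternating and symmetric groups, the combinatorics of partitions with fixed $p$-weight (which is determined by $|P|$ via the Nakayama conjecture) gives uniform control on the decomposition matrices, and hence on the Cartan entries. For finite groups of Lie type in defining characteristic, defect groups of full-defect blocks are Sylow $p$-subgroups, and projective covers are controlled by Steinberg-type tensor product formulas, again yielding bounds depending only on $P$.

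The hard part, as usual, is blocks of groups of Lie type in non-defining characteristic. Such blocks are parametrised, via Bonnaf\'e--Rouquier, by pairs consisting of a semisimple element and a quasi-isolated block of its centraliser, and their Cartan invariants are governed by associated cyclotomic Hecke algebras. When $P$ is abelian, Brou\'e's abelian defect conjecture together with the Farrell--Kessar results already cited in the paper allow one to transfer the problem to the normaliser of $P$ and bound Cartan entries in terms of the local structure; for non-abelian $P$ no such predictive framework is available. One would then need new uniform bounds on decomposition numbers of unipotent and isolated blocks of finite reductive groups, independent of the Lie rank and of the size of the underlying field. Establishing this uniformity over all primes, ranks, and field sizes simultaneously is, in my view, the principal barrier.
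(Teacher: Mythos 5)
You have correctly recognised that Conjecture~\ref{Weak_Donovan:conj} is an open conjecture: the paper does not prove it and never claims to --- its role here is as the target to which Donovan's conjecture for abelian defect groups is reduced --- so there is no proof in the paper against which your attempt can be checked, and your submission is a research programme rather than a proof. Judged as such, the main substantive slip is in your reduction step. The paper's own route to ``it suffices to consider quasisimple groups with $O_p(G)=1$'' (Remark following the conjecture, and Section~\ref{Cartan}) goes through D\"uvel's results~\cite{du04}: Cartan invariants are controlled via the Loewy length $\LL(B)$ and the Ext-bound $e(B)$, which behave well under the Fong--Reynolds, K\"ulshammer--Puig and central $p$-quotient steps, and~\cite[Theorem 3.2]{du04} is what the proof of Theorem~\ref{reduce:theorem} invokes for abelian defect groups. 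By contrast, the K\"ulshammer~\cite{ku95} and Eisele~\cite{eis18} machinery you cite (Theorem~\ref{Kulshammer_O:theorem}) only yields \emph{finitely many Morita equivalence classes} of $B$ given the class of the covered block $C$; that is not, by itself, a numerical bound on Cartan entries in terms of $|P|$, so ``Cartan entries controlled up to a factor depending only on $|P|$'' does not follow from those results as you state it.

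Two further points in your case analysis need flagging. First, for groups of Lie type in non-defining characteristic with abelian defect, you propose to use Brou\'e's abelian defect group conjecture together with~\cite{fk18}; Brou\'e's conjecture is itself open, and~\cite{fk18} bounds strong $\cO$-Frobenius numbers, not Cartan invariants, so this step would not deliver the required bound even conditionally --- the only case actually known, $p=2$ with abelian defect, is proved in~\cite{ekks14} by quite different (classification-based) methods. Second, your defining-characteristic remark is essentially the observation already made in Section~\ref{Cartan} via~\cite{ps11}, so it is consistent with the paper. In summary: no genuine proof exists or is offered, your identification of the bottleneck (uniform bounds for blocks of finite reductive groups in non-defining characteristic) is sound, but the reduction to quasisimple groups should be attributed to and carried out via~\cite{du04} rather than via the Morita-finiteness theorems, which prove a different kind of statement.
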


\begin{rem}
It actually suffices to bound the Cartan invariants of quasisimple groups $G$ with $O_p(G)=1$, as we will see in Section \ref{Cartan}.
\end{rem}

In~\cite{ekks14} it was shown that the Cartan invariants are bounded in terms of the defect group for $2$-blocks with abelian defect groups, so we get:

\begin{thm}
Let $P$ be an abelian $2$-group. Then Donovan's conjecture holds for $P$.
\end{thm}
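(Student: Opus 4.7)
The plan is to deduce the theorem as an essentially immediate consequence of the preceding Corollary together with the Cartan-invariant bound of~\cite{ekks14}. Fix an abelian $2$-group $P$ and set $d$ so that $|P|=2^d$. The preceding Corollary reduces the task to producing a function $c:\NN \to \NN$ such that every $\cO$-block $B$ of a quasisimple group with abelian defect group of order $2^{d'}$ dividing $2^d$ has all Cartan invariants bounded by $c(d')$.

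To supply such a function, I would invoke~\cite{ekks14}, which establishes precisely this bound for \emph{all} $2$-blocks (not only those of quasisimple groups) with abelian defect groups. The only point to verify is that the bound, originally formulated for blocks over $k$, applies equally to $\cO$-blocks; this is immediate, since the Cartan matrix is computed from the decomposition matrix and is therefore independent of whether one works over $k$ or over $\cO$. Feeding this Cartan bound into the preceding Corollary---whose proof in turn relies on Theorem~\ref{reduce:theorem} combined with the Farrell--Kessar bound on $\SOF(B)$ from~\cite{fk18}---delivers Donovan's conjecture for $P$.

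The main obstacle here is really cosmetic rather than substantive: the genuinely hard mathematical content (the reduction in Theorem~\ref{reduce:theorem}, the control of strong $\cO$-Frobenius numbers of quasisimple groups in~\cite{fk18}, and the Cartan bound of~\cite{ekks14}) has already been done elsewhere. The only novelty required is the observation that, once Theorem~\ref{reduce:theorem} is available over $\cO$, these three pieces assemble to settle the abelian $2$-group case. If anything needs care in the writeup, it is merely that~\cite{ekks14} is quoted with defect orders $2^{d'} \mid 2^d$ rather than only $2^{d'}=2^d$, matching the hypothesis of the preceding Corollary.
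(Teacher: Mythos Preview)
Your proposal is correct and follows essentially the same approach as the paper: the paper's proof simply cites Corollary~\ref{Farrell-Kessar_consequence} together with~\cite[9.2]{ekks14} for the Cartan-invariant bound. Your additional remarks about the Cartan matrix being insensitive to $k$ versus $\cO$ and about matching defect orders $2^{d'}\mid 2^d$ are reasonable clarifications but not needed for the argument.
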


The paper is structured as follows. In Section~\ref{strong_Frobenius} we recall the definition of the strong $\cO$-Frobenius number and state some of the main results. In Section \ref{Kessar_analogue} we show that Donovan's conjecture for $\cO$-blocks is equivalent to two separate conjectures as in~\cite{ke04}. Section \ref{reductions} contains the reduction to quasisimple groups. In Section \ref{Cartan} we briefly discuss the problem of bounding Cartan invariants.

\emph{Remark on choice of $\mathcal{O}$ in Donovan's conjecture}: Note that since $\cO/J(\cO)$ is algebraically closed we ensure that $K$ contains all $p'$-roots of unity. In general $\cO$ would have to contain a primitive $|P|^{\nth}$ root of unity in order for $K$ to be a splitting field for a block with defect group $P$, but this condition is not always necessary to demonstrate Donovan's conjecture. We therefore have two canonical choices for $\cO$ in the statement of Donovan's conjecture: the ring of Witt vectors for $\bar \FF_p$ and the same with a primitive $|P|^{\nth}$ root of unity attached. The results of this paper hold over either choice (see Remark \ref{O-remark} for the latter case), but in light of the results of~\cite{fk18} the former seems the best setting for Donovan's conjecture.


\section{Strong $\cO$-Frobenius and $\cO$-Morita-Frobenius numbers}
\label{strong_Frobenius}


The strong $\cO$-Frobenius number was introduced in~\cite{el18a}, but we recall the definition and some of its main properties here. We also define the $\cO$-Morita-Frobenius number. These numbers may be defined for any choice of $\cO$ as in the introduction, although this requires some care when it comes to defining the character idempotents.

Let $A$ be an $\cO$-algebra finitely generated as an $\cO$-module. Write $kA$ for $A \otimes_\cO k$ and $KA$ for $A \otimes_\cO K$. Let $G$ be a finite group and $B$ a block of $\cO G$. Denote by $e_B\in\cO G$ the block idempotent corresponding to $B$ and $e_{kB}\in kG$ the block idempotent corresponding to $kB$. Write $\Irr(G)$ for the set of irreducible characters of $G$ and $\Irr(B)$ for the subset of $\Irr(G)$ of irreducible characters $\chi$ such that $\chi(e_B)\neq0$. For each $\chi\in\Irr(G)$ we denote by $e_\chi\in\overline{K}G$ the character idempotent corresponding to $\chi$, where $\overline{K}$ denotes the algebraic closure of $K$. Note that $\overline{K}B=\bigoplus_{\chi\in\Irr(B)}\overline{K}Ge_\chi$. If $X$ and $Y$ are finitely generated $R$-algebras for $R \in \{ K,\cO ,k,\overline{K}\}$, we write $X\sim_{\operatorname{Mor}} Y$ if the categories of finitely generated $X$ and $Y$-modules are (Morita) equivalent as $R$-linear categories.

Denote by $\pi$ a generator of the maximal ideal of $\cO$. Let $\sigma \in \Gal(K/\QQ_p)$ be such that $\sigma(\pi)=\pi$ and $\sigma$ induces a non-trivial automorphism $\bar{\sigma}$ on $\cO/\pi\cO=k$.

Define $A^{(\sigma)}$ to be the $\cO$-algebra with the same underlying ring structure as $A$ but with a new action of the scalars given by $\lambda.a=\lambda^{\sigma^{-1}}a$, for all $\lambda\in \cO$ and $a\in A$. We may similarly define $(kA)^{(\bar{\sigma})}$. We note that, through the identification of elements, $A$ and $A^{(\sigma)}$ are isomorphic as rings but not necessarily as $\cO$-algebras.

In the case that $\bar{\sigma}$ is the Frobenius automorphism given by $x \mapsto x^q$, where $q$ is a power of $p$, it is sometimes convenient to write $-^{(q)}$ for $\bar{\sigma}$. If $B$ is a block of $\cO G$, for some finite group $G$, then we can also define $B^{(q)}$ to be $B^{(\sigma)}$, where $\sigma$ is some lift of $-^{(q)}$. We define $B^{(q)}$ in an alternative way below. In particular we show that it is independent of the choice of $\sigma$.

For a general $\sigma$ we have $\cO G^{(\sigma)}\cong \cO G$ and we can realise this isomorphism via:
\begin{align*}
\cO G^{(\sigma)}&\to \cO G\\
\sum_{g\in G}\alpha_gg&\mapsto\sum_{g\in G}\sigma(\alpha_g)g.
\end{align*}
If $B$ is a block of $\cO G$, then we identify $B^{(\sigma)}$ with its image under the above isomorphism. We can do analogous identifications with $kG$ and its blocks.

Let $q=p^z$ for some $z\in \mathbb Z$. By an abuse of notation we use $-^{(q)}$ to also denote the field automorphism of the universal cyclotomic extension of $\mathbb{Q}$ defined by $\omega_p\omega_{p'}\mapsto\omega_p\omega_{p'}^{q}$, for all $p^{\nth}$-power roots of unity $\omega_p$ and $p'^{\nth}$ roots of unity $\omega_{p'}$. If $\chi\in\Irr(G)$, then we define $\chi^{(q)}\in\Irr(G)$ to be given by $\chi^{(q)}(g)=\chi(g)^{(q)}$ for all $g\in G$. If $B$ is a block of $\cO G$ with $\chi\in\Irr(B)$, then we define $B^{(q)}$ to be the block of $\cO G$ with $\chi^{(q)}\in\Irr(B^{(q)})$. We have $(kB)^{(q)}=k(B^{(q)})$, in particular $B^{(q)}$ is well-defined. Note that if $\sigma:\cO\to\cO$ is a lift of $-^{(q)}$, then $B^{(q)}= B^{(\sigma)}$.

\begin{defi}
\label{morita_frobenius:def}
Let $A$ be a finitely generated $k$-algebra.
\begin{enumerate}[(i)]
\item The \textbf{Morita Frobenius number} $\mf(A)$ of $A$ is the smallest integer $n$ such that $A\sim_{\operatorname{Mor}}A^{(p^n)}$ as $k$-algebras.
\end{enumerate}
Let $B$ a block of $\cO G$, for some finite group $G$.
\begin{enumerate}
\item[(ii)] The \textbf{$\cO$-Morita Frobenius number} $\mfO(B)$ of $B$ is the smallest integer $n$ such that $B\sim_{\operatorname{Mor}} B^{(p^n)}$ as $\cO$-algebras.

\item[(iii)] The \textbf{$\cO$-Frobenius number} $\OF(B)$ of $B$ is the smallest integer $n$ such that $B\cong B^{(p^n)}$ as $\cO$-algebras.

\item[(iv)] The \textbf{strong $\cO$-Frobenius number} $\SOF(B)$ of $B$ is the smallest integer $n$ such that there exists an $\cO$-algebra isomorphism $B\to B^{(p^n)}$ such that the induced $\overline{K}$-algebra isomorphism $\overline{K}B\to \overline{K}B^{(p^n)}$ sends $\chi$ to $\chi^{(p^n)}$ for all $\chi\in\Irr(B)$.
\end{enumerate}
\end{defi}

Note that the definition of strong $\cO$-Frobenius number given above is not exactly the same as that given in~\cite[Definition 3.8]{el18a} but the two are shown to be equivalent in~\cite[Proposition 3.5]{el18a}.

A consequence of the following is that bounding the strong $\cO$-Frobenius numbers in terms of the size of the defect group is equivalent to bounding the $\cO$-Morita-Frobenius numbers in terms of the size of the defect group.

\begin{prop}
\label{morita_O_proposition}
Let $G$ and $H$ be finite groups, and let $B$ and $C$ be blocks of $\cO G$ and $\cO H$ respectively. Let $D$ be a defect group for $B$.

\begin{enumerate}[(i)]
\item $\mf(kB) \leq \mfO(B) \leq \OF (B) \leq \SOF(B)\leq|D|^2!\OF(B)$.
\item If $B$ and $C$ are Morita equivalent, then $\SOF(B)=\SOF(C)$ and $\mfO(B)=\mfO(C)$.

\end{enumerate}
\end{prop}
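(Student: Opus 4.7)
The plan is to verify the four inequalities in (i) in turn and then to derive (ii). The first three inequalities in (i) are essentially tautological. Applying $-\otimes_\cO k$ sends an $\cO$-algebra Morita equivalence $B\sim_{\operatorname{Mor}} B^{(p^n)}$ to a $k$-algebra Morita equivalence $kB\sim_{\operatorname{Mor}}(kB)^{(p^n)}$ (using that $k(B^{(p^n)})=(kB)^{(p^n)}$), giving $\mf(kB)\leq\mfO(B)$. Any $\cO$-algebra isomorphism is a Morita equivalence via its regular bimodule, giving $\mfO(B)\leq\OF(B)$. A strong $\cO$-Frobenius isomorphism is in particular an $\cO$-algebra isomorphism, giving $\OF(B)\leq\SOF(B)$.

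The main work in (i) is the bound $\SOF(B)\leq|D|^2!\cdot\OF(B)$. Set $n=\OF(B)$ and fix an $\cO$-algebra isomorphism $\varphi:B\to B^{(\sigma)}$, where $\sigma$ lifts the $p^n$-power Frobenius. The induced $\overline{K}$-algebra map $\overline{K}\varphi$ must permute primitive central idempotents, producing a permutation $\tau$ of $\Irr(B)$ with $\overline{K}\varphi(e_\chi)=e_{\tau(\chi)^{(p^n)}}$. Iterating, $\varphi^m:B\to B^{(\sigma^m)}$ is an $\cO$-algebra isomorphism whose induced permutation on $\Irr(B)$ is $\tau^m$; choosing $m$ equal to the order of $\tau$ produces a strong $\cO$-Frobenius isomorphism at level $mn$. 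Since the order of $\tau$ divides $k(B)!$ and Brauer--Feit gives $k(B)\leq\frac{1}{4}|D|^2+1\leq|D|^2$, one obtains $m\leq|D|^2!$, which yields the claimed bound.

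For (ii), the equality $\mfO(B)=\mfO(C)$ is a direct transport argument: if $M$ is a bimodule realizing $B\sim_{\operatorname{Mor}} C$, then $M^{(\sigma)}$ realizes $B^{(\sigma)}\sim_{\operatorname{Mor}} C^{(\sigma)}$, so any Morita equivalence $B\sim_{\operatorname{Mor}} B^{(\sigma)}$ yields $C\sim_{\operatorname{Mor}} C^{(\sigma)}$ by composition, giving $\mfO(C)\leq\mfO(B)$; symmetry provides equality.

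The principal obstacle, and the hardest step, is the equality $\SOF(B)=\SOF(C)$. My plan is to fix a progenerator $P\in B\textrm{-mod}$ with $\End_B(P)^{\mathrm{op}}\cong C$ and, given a strong $\cO$-Frobenius isomorphism $\varphi:B\to B^{(\sigma)}$, consider the pulled-back $B$-module ${}_\varphi P^{(\sigma)}$, which is again a progenerator of $B\textrm{-mod}$ with endomorphism algebra $C^{(\sigma)}$. The strong Frobenius condition forces $\overline{K}\varphi$ to act as the Galois automorphism on $\Irr(B)$, which via the decomposition matrix constrains the induced permutation of $\IBr(B)$, and thereby the permutation of isomorphism classes of PIMs of $B$, to be the corresponding Galois permutation. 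The hard technical point is to deduce from this that $P\cong{}_\varphi P^{(\sigma)}$ as left $B$-modules, which would produce an $\cO$-algebra isomorphism $\psi:C\to C^{(\sigma)}$; the strong Frobenius property of $\psi$ will then follow by showing the Morita-induced bijection $\Irr(B)\leftrightarrow\Irr(C)$ is Galois-equivariant, which is automatic for a Morita equivalence defined over $\cO$.
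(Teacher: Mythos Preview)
Your argument is correct, and in fact provides considerably more detail than the paper's own proof, which simply reads: ``(i) The first three inequalities should be clear from the definitions and the final inequality is in~\cite[Proposition 3.11]{el18a}. (ii) The first part is~\cite[Proposition 3.12]{el18a} and the second is immediate from the definition.'' So there is no detailed argument in the paper to compare against; you are essentially reconstructing the cited results.

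For (i), your treatment of the first three inequalities matches what the paper declares ``clear from the definitions''. Your argument for $\SOF(B)\leq |D|^2!\cdot\OF(B)$ via iterating an isomorphism $\varphi:B\to B^{(\sigma)}$, tracking the induced permutation $\tau$ of $\Irr(B)$, and invoking Brauer--Feit to bound $k(B)\leq |D|^2$ is exactly the expected proof (and is what \cite[Proposition~3.11]{el18a} does). One small point worth making explicit is how ``iterating $\varphi$'' is to be understood: one composes $\varphi$ with its successive $\sigma$-twists, and a short computation confirms the resulting permutation really is $\tau^m$.

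For (ii), your argument for $\mfO$ is precisely the ``immediate from the definition'' step. Your plan for $\SOF(B)=\SOF(C)$ is sound and is the natural approach. The ``hard technical point'' you flag, namely $P\cong{}_\varphi P^{(\sigma)}$, is indeed resolved by your decomposition-matrix idea: the strong Frobenius hypothesis forces $\varphi$ to act as the Galois twist on $K_0(\overline{K}B)$, and since the map $K_0(B\text{-proj})\hookrightarrow K_0(\overline{K}B)$ is injective and Galois-equivariant, $\varphi$ must act as the Galois twist on projectives. Your final claim, that the Morita bijection $\Irr(B)\leftrightarrow\Irr(C)$ is ``Galois-equivariant'', deserves a word of caution: what is actually needed (and true) is that the bijection induced by the twisted bimodule $M^{(\sigma)}$ is the Galois-conjugate of the bijection induced by $M$; this follows from functoriality of the twist on the centre isomorphism $Z(B)\cong Z(C)$, but it is not quite as ``automatic'' as your phrasing suggests. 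With that caveat, your outline completes to a correct proof.
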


\begin{proof}
\begin{enumerate}[(i)]
\item The first three inequalities should be clear from the definitions and the final inequality is in~\cite[Proposition 3.11]{el18a}.
\item The first part is~\cite[Proposition 3.12]{el18a} and the second is immediate from the definition.
\end{enumerate}
\end{proof}

We state an analogue of~\cite[Conjecture 1.3]{ke04}:

\begin{con}
\label{O-Morita-Frobenius:conj}
Let $P$ be a finite $p$-group. Then there is $s(P) \in \NN$ such that if $G$ is a finite group and $B$ is a block of $\cO G$ with defect groups isomorphic to $P$, then $\SOF(B) \leq s(P)$.

Equivalently, there is $t(P) \in \NN$ such that if $G$ is a finite group and $B$ is a block of $\cO G$ with defect groups isomorphic to $P$, then $\mfO(B) \leq t(P)$.
\end{con}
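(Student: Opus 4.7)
The plan is to prove the equivalence by establishing each direction separately. The direction (a) $\Rightarrow$ (b) is immediate from Proposition \ref{morita_O_proposition}(i), which gives $\mfO(B) \leq \SOF(B)$: any uniform bound $\SOF(B) \leq s(P)$ passes verbatim to $\mfO(B) \leq s(P)$, so one may take $t(P) = s(P)$.

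For the substantive direction (b) $\Rightarrow$ (a), I would start from the further inequality $\SOF(B) \leq |D|^2!\,\OF(B)$ in Proposition \ref{morita_O_proposition}(i), which reduces the task to bounding $\OF(B)$ in terms of $\mfO(B)$ and $|D|$. The strategy is to pass to the basic $\cO$-algebra $A$ of $B$: since the basic algebra is a Morita invariant, and since the basic algebra of $B^{(p^m)}$ is $A^{(p^m)}$, the hypothesis $\mfO(B) \leq m$ yields an $\cO$-algebra isomorphism $\phi \colon A \to A^{(p^m)}$. Writing $B \cong \End_A(P)$ for a progenerator $P = \bigoplus_{i=1}^{\ell} n_i P_i$, where $P_1, \ldots, P_\ell$ represent the isomorphism classes of indecomposable projective $A$-modules and $\ell = \ell(B)$, I would identify $B^{(p^m)} \cong \End_A(\phi^\ast P^{(p^m)})$. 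The pulled-back progenerator has the same multiset of multiplicities as $P$, but with the indices permuted by the permutation $\pi \in S_\ell$ that $\phi$ induces on the isomorphism classes of indecomposable projectives. Iterating, one obtains $B \cong B^{(p^{mk})}$ whenever $\pi^k$ fixes the multiplicity vector, and since $|\pi|$ divides $\ell!$, this gives $\OF(B) \leq \mfO(B) \cdot \ell(B)!$.

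To close the loop unconditionally I would invoke the Brauer--Feit theorem to bound $\ell(B) \leq k(B)$ by a function of $|D|$ alone. Combining the estimates yields $\SOF(B) \leq |D|^2! \cdot \mfO(B) \cdot \ell(B)!$, and setting $s(P)$ equal to this expression with $\mfO(B) \leq t(P)$ and $|D| = |P|$ finishes the proof. The main obstacle I anticipate is the bookkeeping required to lift a Morita equivalence $B \sim_{\operatorname{Mor}} B^{(p^m)}$ at the $\cO$-level to an honest $\cO$-algebra isomorphism $B \cong B^{(p^{m'})}$: one must track simultaneously how $\phi$ permutes the isomorphism classes of indecomposable projective $A$-modules and how the multiplicities reconstructing $B$ from $A$ behave under this permutation, which is a genuinely $\cO$-specific subtlety absent over $k$. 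The appeal to Brauer--Feit to bound $\ell(B)$ is essential but otherwise painless.
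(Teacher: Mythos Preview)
Your argument is correct. The paper does not actually give a proof of the equivalence; it merely asserts, in the sentence preceding Proposition~\ref{morita_O_proposition}, that the equivalence is a consequence of that proposition, and leaves the details to the reader. You have supplied those details. The easy direction is exactly as you say. For the harder direction you correctly note that the chain in Proposition~\ref{morita_O_proposition}(i) only yields $\SOF(B)\leq |D|^2!\cdot\OF(B)$, so one still needs $\OF(B)$ bounded in terms of $\mfO(B)$ and $|D|$. Your passage to the basic algebra $A$, using that a Morita equivalence $B\sim_{\operatorname{Mor}} B^{(p^m)}$ forces $A\cong A^{(p^m)}$ and hence induces a permutation of the $\ell(B)$ isomorphism classes of indecomposable projectives, is the standard way to obtain $\OF(B)\leq \mfO(B)\cdot\ell(B)!$; it is precisely the analogue, at the level of projectives, of the character-permutation argument in~\cite{el18a} which underlies $\SOF(B)\leq |D|^2!\cdot\OF(B)$. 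The Brauer--Feit bound $\ell(B)\leq k(B)\leq |D|^2$ then closes the argument. The bookkeeping you flag (tracking how the iterated isomorphism acts on the progenerator) is routine once one observes that iterating $\phi$ a multiple of $|\pi|$ times trivialises the permutation and hence makes the pulled-back progenerator isomorphic to the original one.
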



\section{Morita-Frobenius numbers and Donovan's conjecture}
\label{Kessar_analogue}

As in the rest of the paper, the results of this section hold over any complete discrete valuation ring $\mathcal{O}$ with residue field $k=\bar \FF_p$, but we have in mind the ring of Witt vectors of $k$. Denote by $\pi$ a generator of the maximal ideal of $\cO$. Let us fix an element $\sigma$ of $\Gal(K/\QQ_p)$ such that $\sigma(\pi)=\pi$ and $\sigma$ induces a positive power of the Frobenius automorphism on $\cO/\pi\cO$.
If $\cO$ is the ring of Witt vectors over $k$, then $\pi=p$ and any power of the Frobenius automorphism of $k$ can explicitly  be lifted to $\cO$.
 We denote the automorphism of $k$ that $\sigma$ induces by $\bar \sigma$.
The ultimate aim of the section is to prove an analogue over $\cO$ for Kessar's results in~\cite{ke04} which hold over $k$.

Defining ``$-^{\langle \sigma\rangle}$'', resp.``$-^{\langle \bar{\sigma}\rangle}$'', to be the elements fixed under $\sigma$, resp. $\bar{\sigma}$, the field $ k^{\langle\bar{\sigma}\rangle}$ is finite by definition and we claim that $(K^{\langle\sigma\rangle}, \cO^{\langle\sigma\rangle}, k^{\langle\bar{\sigma}\rangle})$ is again a $p$-modular system. It is clear that $K^{\langle\sigma\rangle}$ is complete and that $\cO^{\langle\sigma\rangle}$ is integrally closed in $K^{\langle\sigma\rangle}$. Moreover $\cO^{\langle\sigma\rangle} /\pi \cO^{\langle\sigma\rangle} \subseteq k^{\langle\bar{\sigma}\rangle}$. We just need to check that this inclusion is an equality. To see this, note that every non-zero element of $k^{\langle \bar{\sigma} \rangle}$ is a $(|k^{\langle\bar{\sigma}\rangle}|-1)^{\nth}$ root of unity, and those lift to $\cO$ by Hensel's lemma. That is, the groups of $(|k^{\langle\bar{\sigma}\rangle}|-1)^{\nth}$ roots of unity of $\cO$ and $k$ are in bijection (via reduction mod $\pi$), and since $\bar{\sigma}$ acts trivially on the latter, $\sigma$ must act trivially on the former. Hence $(|k^{\langle\bar{\sigma}\rangle}|-1)^{\nth}$ roots of unity in $\cO$ lie in $\cO^{\langle\sigma\rangle}$ and reduce to the non-zero elements of $k^{\langle\bar{\sigma}\rangle}$, so the claim is shown.

	\begin{defi}[Order]
		We call an $\cO$-algebra $\Lambda$ an \emph{$\cO$-order} if it is free and finitely generated as an $\cO$-module. By an $\cO$-order in a finite-dimensional $K$-algebra $A$ we mean an $\cO$-order contained in $A$ which, in addition, spans $A$ as a vector space over $K$.
	\end{defi}

    \begin{prop}[Lang's theorem over $\cO$]\label{prop lang over o}
        Let $m \in \NN$ and extend $\sigma$ to $K^{m\times m}$ entry-wise:
        $$
            \sigma:\ K^{m\times m} \longrightarrow K^{m\times m}:\ (a_{i,j})_{i,j} \mapsto (\sigma(a_{i,j}))_{i,j}
        $$
        Then the map
        $$
            \GL_m(\cO) \longrightarrow \GL_m(\cO):\ A \mapsto A^{-1} \cdot \sigma(A)
        $$
        is surjective.
    \end{prop}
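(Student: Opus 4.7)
My plan is to apply the classical Lang--Steinberg theorem for $\GL_m$ at the residue-field level to obtain a first approximation, then improve it $\pi$-adically by a Hensel-style iteration in which each step again reduces to an instance of Lang--Steinberg.

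Reducing $B$ modulo $\pi$ yields $\bar B\in\GL_m(k)$. Since $\GL_m$ is connected and $\bar\sigma$ is a positive power of the Frobenius on $k=\bar\FF_p$, the classical Lang--Steinberg theorem gives $\bar A_0\in\GL_m(k)$ with $\bar A_0^{-1}\bar\sigma(\bar A_0)=\bar B$. Lift $\bar A_0$ to some $A_0\in\GL_m(\cO)$ and then inductively construct $A_n=A_{n-1}(I+\pi^n X_n)$ with $X_n\in \cO^{m\times m}$, such that $A_n^{-1}\sigma(A_n)\equiv B\pmod{\pi^{n+1}}$. Writing $A_{n-1}^{-1}\sigma(A_{n-1})=B(I+\pi^n D_{n-1})$ and expanding modulo $\pi^{n+1}$, the condition on $X_n$ reduces to solving
\[
\bar\sigma(\bar X_n)\;-\;\bar B^{-1}\bar X_n\bar B\;=\;-\bar D_{n-1}
\]
in $k^{m\times m}$ for arbitrary right-hand side. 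Granted this, each $A_n$ lies in $\GL_m(\cO)$ (since $I+\pi^n X_n$ reduces to $I$ modulo $\pi$) and the sequence $(A_n)$ is $\pi$-adically Cauchy; by completeness of $\cO$ it converges to an $A\in\GL_m(\cO)$ with $A^{-1}\sigma(A)=B$.

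The crux is therefore the surjectivity of the semilinear additive map $\bar X\mapsto\bar\sigma(\bar X)-\bar B^{-1}\bar X\bar B$ on $k^{m\times m}$. Multiplying on the left by $-\bar B$, this is equivalent to the surjectivity of $1-\eta$, where $\eta\colon\bar X\mapsto \bar B\,\bar\sigma(\bar X)\,\bar B^{-1}$ is a purely inseparable isogeny of the connected algebraic group $\mathbb{G}_a^{m^2}$ over $\bar\FF_p$ with finite fixed-point set (the latter is cut out by a polynomial system of degree $q=|k^{\bar\sigma}|$ in $m^2$ variables). Applying Lang--Steinberg to $\mathbb{G}_a^{m^2}$ with Frobenius endomorphism $\eta$ delivers the required surjectivity, since on an abelian group $H$ the Lang map $h\mapsto h^{-1}F(h)$ agrees with $-(1-F)$.

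I expect the main obstacle to be recognising the linearised obstruction as another instance of Lang--Steinberg, rather than as an apparently more complicated mixed equation involving both inner conjugation by $\bar B$ and the semilinear Frobenius $\bar\sigma$; the remaining ingredients ($\pi$-adic convergence of $(A_n)$ and completeness of $\cO$) are routine.
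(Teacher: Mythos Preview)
Your proof is correct and follows the same overall architecture as the paper's: apply Lang over $k$ to get a first approximation, then lift $\pi$-adically by solving a linearised equation at each step and passing to the limit. The difference lies in how you parametrise the correction. You set $A_n=A_{n-1}(I+\pi^nX_n)$, multiplying on the \emph{right}; this forces the linearised obstruction to carry the conjugation by $\bar B$, namely $\bar\sigma(\bar X)-\bar B^{-1}\bar X\bar B$, which you then dispatch by a second appeal to Lang--Steinberg on $\mathbb{G}_a^{m^2}$. The paper instead writes $C_{n+1}=(I-\pi^nX)C_n$, multiplying on the \emph{left}. With that choice the factors $C_n$ and $\sigma(C_n)$ cancel against one another and the obstruction becomes simply $X-\sigma(X)=\text{const}$, which decouples into $m^2$ scalar equations $x-x^q=z$ solved directly by algebraic closedness of $k$. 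So the paper avoids the second use of Lang--Steinberg altogether; your route works but is heavier than necessary at exactly the point you flagged as the main obstacle.

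One small remark: your parenthetical that the fixed-point set of $\eta$ ``is cut out by a polynomial system of degree $q$ in $m^2$ variables'' does not by itself guarantee finiteness. The clean justification is that $d\eta=0$ (the differential of Frobenius vanishes), so $1-\eta$ is \'etale and hence has finite fibres; this both verifies the Lang--Steinberg hypothesis and, incidentally, already gives surjectivity of $1-\eta$ directly (an \'etale group endomorphism of a connected group has open image, hence is onto).
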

    \begin{proof}
         Note that the restriction of the epimorphism
        $$\bar{\phantom{A}}:\ \cO^{m\times m} \longrightarrow k^{m\times m}:\ A \mapsto A + \pi \cdot \cO^{m\times m}$$
        to $\GL_m(\cO)$ induces an epimorphism $\GL_m(\cO)\longrightarrow \GL_m(k)$. Therefore, given a matrix $A\in \GL_m(\cO)$, Lang's theorem gives us a matrix $B\in \GL_m(k)$ such that $\bar A = B^{-1}\cdot \bar{\sigma}(B)$, where $\bar{\sigma}(B)$ is the image of $B$ with $\bar{\sigma}$ applied entry-wise. Choose $C_1\in \GL_m(\cO)$ such that $\bar C_1 = B$. Then clearly
        $
            A - C_1^{-1} \cdot \sigma (C_1) \in \pi\cdot \cO^{m\times m}
        $.
        Now let $n\in\mathbb{N}$ and assume there exist $C_i\in\GL_m(\cO)$ for each $1\leq i\leq n$ satisfying
        $
            A - C_i^{-1} \cdot \sigma (C_i) \in \pi^i\cdot \cO^{m\times m}
        $
        for each $1\leq i\leq n$ and
        $
            C_i-C_{i+1} \in \pi^i\cdot \cO^{m\times m}
        $
        for each $1\leq i\leq n-1$. Then, for any $X\in \cO^{m\times m}$:
        $$
            \begin{array}{rl}
            &(C_n-\pi^n\cdot X\cdot C_n)^{-1}\cdot \sigma (C_n-\pi^n \cdot X\cdot C_n)
            \\\\=&  ((1-\pi^n\cdot X)\cdot C_n)^{-1} \cdot (1-\pi^n\cdot \sigma(X))\cdot \sigma(C_n)
             \\\\=& C_n^{-1}\cdot (1-\pi^n\cdot X)^{-1} \cdot (1-\pi^n\cdot \sigma(X))\cdot \sigma(C_n)\\\\
            =& \displaystyle C_n^{-1} \cdot \left(\sum_{j=0}^{\infty} \pi^{n\cdot j}\cdot X^j \right) \cdot (1-\pi^n\cdot \sigma(X)) \cdot \sigma(C_n).
            \end{array}
        $$
        Hence
        $$
            (C_n-\pi^n\cdot X\cdot C_n)^{-1}\cdot \sigma (C_n-\pi^n \cdot X\cdot C_n)
            \equiv C_n^{-1} \cdot (1+\pi^n\cdot (X - \sigma(X))) \cdot \sigma(C_n)\  (\textrm{mod } \pi^{n+1}).
        $$
        If we set
        $
            C_{n+1}:=C_n-\pi^n\cdot X\cdot C_n,
        $
        then we have
        $
            A - C_{n+1}^{-1} \cdot \sigma (C_{n+1}) \in \pi^{n+1}\cdot \cO^{m\times m}
        $
        if and only if $X$ satisfies
        $$
            C_n\cdot A \cdot \sigma(C_n)^{-1} \equiv 1+\pi^n\cdot (X-\sigma(X))
            \ (\textrm{mod }\pi^{n+1}).
        $$
        The same congruence mod $\pi^n$ is satisfied by assumption. Thus we can rewrite this as
        $$
            \pi^{-n}\cdot (C_n\cdot A \cdot \sigma(C_n)^{-1}-1) \equiv X - \sigma(X) \ (\textrm{mod }\pi).
        $$
        We can find such an $X$ once we show that the map
        $$
            k^{m\times m} \longrightarrow k^{m\times m}: (x_{i,j}) \mapsto (x_{i,j}-\bar{\sigma}(x_{i,j})) = (x_{i,j}-x_{i,j}^q)
        $$
        is surjective. Surjectivity of this map is equivalent to the statement that the polynomial equation $x-x^q-z=0$ has a solution in $k$ for any $z \in k$. Since $k$ is algebraically closed, such a solution always exists. Therefore, by induction, there exist $C_i\in\GL_m(\cO)$ for each $i\in\mathbb{N}$ satisfying
        $
            A - C_i^{-1} \cdot \sigma (C_i) \in \pi^i\cdot \cO^{m\times m}
        $
        and
        $
            C_i-C_{i+1} \in \pi^i\cdot \cO^{m\times m}
        $
        for each $i\in\mathbb{N}$. Therefore, since $\cO$ is complete with respect to $(\pi)$, there exists some $C\in\GL_m(\cO)$ (the limit of the $C_i$'s) such that $ A=C^{-1} \cdot \sigma (C)$.
    \end{proof}

    \begin{thm}\label{thm ex o0 form}
        Let $\Lambda$ be an $\cO$-order. Set $K_0=K^{\langle \sigma\rangle}$ and $\cO_0=\cO^{\langle \sigma\rangle}$.
        If there is an isomorphism of $\cO$-algebras
        $$
            \Phi:\ \Lambda \longrightarrow {\Lambda^{(\sigma)}}
        $$
        then there exists an $\cO_0$-algebra $\Lambda_0\subseteq \Lambda$ such that $\Lambda \cong \cO \otimes_{\cO_0} \Lambda_0$.
    \end{thm}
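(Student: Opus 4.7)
The plan is to use Galois descent: take $\Lambda_0$ to be the fixed points in $\Lambda$ of the semilinear action coming from $\Phi$, then show that the multiplication map $\cO \otimes_{\cO_0} \Lambda_0 \to \Lambda$ is an isomorphism. Since the underlying ring of $\Lambda^{(\sigma)}$ coincides with that of $\Lambda$ and only the scalar action is twisted, the given $\cO$-algebra isomorphism $\Phi$ may be viewed as a ring automorphism of $\Lambda$ satisfying $\Phi(\lambda x) = \sigma^{-1}(\lambda)\,\Phi(x)$ for all $\lambda \in \cO$ and $x \in \Lambda$. Setting $\Lambda_0 := \{x \in \Lambda : \Phi(x) = x\}$, it is immediate that $\Lambda_0$ is an $\cO_0$-subalgebra of $\Lambda$ containing the identity, since $\Phi$ is a ring homomorphism and is $\cO_0$-linear (because $\sigma$ fixes $\cO_0$ by definition).

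The key step is to produce an $\cO$-basis $f_1, \ldots, f_m$ of $\Lambda$ consisting of $\Phi$-fixed elements. Given such a basis, any $x = \sum_i x_i f_i \in \Lambda_0$ would satisfy $\sum_i \sigma^{-1}(x_i) f_i = \Phi(x) = x = \sum_i x_i f_i$, forcing $x_i \in \cO_0$ for each $i$; this identifies $\Lambda_0$ with $\bigoplus_i \cO_0 f_i$ and makes $\cO \otimes_{\cO_0} \Lambda_0 \to \Lambda$ an isomorphism. To find such a basis, I would fix any $\cO$-basis $e_1, \ldots, e_m$ of $\Lambda$ and encode $\Phi$ as a matrix $P \in \GL_m(\cO)$ via $\Phi(e_j) = \sum_i p_{ij} e_i$. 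A candidate new basis $f_j := \sum_i c_{ij} e_i$ coming from some $C = (c_{ij}) \in \GL_m(\cO)$ is $\Phi$-fixed precisely when $P\,\sigma^{-1}(C) = C$; rearranging (and setting $D = C^{-1}$) this is equivalent to $D^{-1}\sigma(D) = \sigma(P)^{-1}$, which is exactly the form of the equation resolved by Proposition \ref{prop lang over o}.

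The entire substance of the argument therefore lies in Proposition \ref{prop lang over o}, and the descent of the algebra is a formal consequence. The main conceptual obstacle is that $\sigma$ typically has infinite order on $\cO$ (since $\bar\sigma$ is a non-trivial power of Frobenius on the algebraically closed residue field $k$), so that no finite Galois averaging is available. This is precisely why one needs the $\pi$-adic approximation argument that builds $D$ one digit at a time, which has already been carried out in Proposition \ref{prop lang over o}; after that, nothing more than the book-keeping above is required.
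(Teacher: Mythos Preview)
Your proof is correct and follows essentially the same route as the paper's: both arguments view $\Phi$ as a $\sigma$-semilinear automorphism of $\Lambda$, reduce the existence of a $\Phi$-fixed $\cO$-basis to an equation of the form $D^{-1}\sigma(D)=(\text{given element of }\GL_m(\cO))$, and invoke Proposition~\ref{prop lang over o} to solve it. The only cosmetic difference is that you define $\Lambda_0$ as the $\Phi$-fixed points and then identify it with the $\cO_0$-span of the fixed basis, whereas the paper takes the $\cO_0$-span of the fixed basis directly and checks the structure constants lie in $\cO_0$; these are equivalent.
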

    \begin{proof}
        As a set $\Lambda^{(\sigma)}$ is equal to $\Lambda$, and hence we may view $\Phi$ as a $\sigma$-sesquilinear map from $\Lambda$ into itself. Now fix an isomorphism of $\cO$-lattices $\Delta:\ \Lambda \longrightarrow \cO^n$, where $n=\operatorname{rank}_\cO(\Lambda)$. Let $F:\ \cO^n \longrightarrow \cO^n$ denote the $\sigma$-sesquilinear map given by entry-wise application of $\sigma$.
         Then the map
        $$
            \Delta\circ \Phi \circ \Delta^{-1} \circ F^{-1}: \cO^n\longrightarrow \cO^n
        $$
        is $\cO$-linear (being the composition of a $\sigma$-sesquilinear and a $\sigma^{-1}$-sesquilinear map), and may therefore be viewed as an element of $\GL_n(\cO)$ (as all maps involved in its construction are bijections). Now  Lemma~\ref{prop lang over o} implies that there is an $A\in \GL_n(\cO)$ such that
        $$
            \Delta\circ \Phi \circ \Delta^{-1} \circ F^{-1} = A^{-1}\circ \sigma(A) = A^{-1}\circ F\circ A \circ F^{-1}.
        $$
        The above equation implies that
        \begin{equation}\label{eqn phi f}
        A\circ \Delta\circ\Phi\circ \Delta^{-1}\circ A^{-1}=F.
        \end{equation}
        Let $e_1,\ldots,e_n$ denote the standard basis of $\cO^n$, and set $\lambda_i=\Delta^{-1}(A^{-1}(e_i))$ for $1\leq i \leq n$. Since $F(e_i)=e_i$ for all $i$, formula \eqref{eqn phi f} implies that $\Phi(\lambda_i)=\lambda_i$ for all $i$.

        Next, let us define structure constants $m_{i,j;l}\in \cO$ for $i,j,l\in\{1,\ldots,n\}$ such that
        $$
        \lambda_i\cdot\lambda_j = \sum_{l=1}^n m_{i,j;l} \cdot \lambda_l \quad \textrm{for all $i,j\in\{1,\ldots,n\}$}.
        $$
        The $\sigma$-sesquilinearity of $\Phi$ implies that
        $$
        \Phi(\lambda_i \cdot \lambda_j) = \Phi(\sum_{l=1}^n m_{i,j;l} \cdot \lambda_l) = \sum_{l=1}^n \sigma(m_{i,j;l}) \cdot \lambda_l.
        $$
        The fact that $\Phi$ is multiplicative (by virtue of being an algebra isomorphism between $\Lambda$ and $\Lambda^\sigma$) implies that
        $$
        \Phi(\lambda_i \cdot \lambda_j) = \Phi(\lambda_i)\cdot \Phi(\lambda_j) =
        \lambda_i\cdot\lambda_j = \sum_{l=1}^n m_{i,j;l} \cdot \lambda_l.
        $$
        Since the $\lambda_i$ are linearly independent it follows that $m_{i,j;l}=\sigma(m_{i,j;l})$ for all $i,j,l\in\{1,\ldots,n\}$, i.e. $m_{i,j;l}\in \cO_0$. This implies that the $\cO_0$-lattice spanned by $\lambda_1,\ldots,\lambda_n$ is an $\cO_0$-algebra, which completes the proof.
    \end{proof}

In the following we have in mind the case $K_0=K^{\langle \sigma\rangle}$.

    \begin{prop}\label{prop finitely many cyclotomic split}
        Given a finite extension $K_0/\QQ_p$, and a natural number $n$, there are only finitely many isomorphism classes of semi-simple
        $K_0$-algebras $A$ of dimension $n$.
    \end{prop}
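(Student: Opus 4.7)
The plan is to reduce the statement to two standard finiteness results about $p$-adic local fields via the Artin--Wedderburn theorem.

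First, I would decompose. By Artin--Wedderburn, any semi-simple $K_0$-algebra $A$ of dimension $n$ can be written as
$$
A \;\cong\; \prod_{i=1}^{r} \Mat_{n_i}(D_i),
$$
where each $D_i$ is a finite-dimensional division algebra over its centre $Z_i$, and $Z_i$ is a finite extension of $K_0$. Writing $[D_i : Z_i] = d_i^2$ (central simple algebras have square dimension over their centre), the dimension formula
$$
n \;=\; \sum_{i=1}^{r} n_i^2 \cdot d_i^2 \cdot [Z_i : K_0]
$$
shows that $r$, $n_i$, $d_i$, and $[Z_i : K_0]$ are all bounded by $n$. Hence it suffices to prove, for each bound $N$:
\begin{enumerate}[(a)]
\item there are only finitely many isomorphism classes of extensions $Z/K_0$ with $[Z : K_0] \leq N$;
\item for each such $Z$, there are only finitely many isomorphism classes of division algebras $D$ with centre $Z$ and $[D : Z] \leq N$.
\end{enumerate}

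Next I would dispatch (a) and (b) by citing the appropriate local theory. Statement (a) is a well-known consequence of Krasner's lemma: a finite extension of $\QQ_p$ has only finitely many extensions of any given degree inside a fixed algebraic closure (and hence only finitely many up to isomorphism). Statement (b) follows from local class field theory: the Brauer group of a finite extension $Z$ of $\QQ_p$ is canonically isomorphic to $\QQ/\ZZ$ via the Hasse invariant, and a central simple $Z$-algebra of dimension $d^2$ is a division algebra if and only if its class in $\Br(Z)$ has order exactly $d$; there are only finitely many such classes, and each determines a unique division algebra up to isomorphism.

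Combining the bounded number of Wedderburn factors with the finiteness statements (a) and (b) (applied with $N = n$), there are only finitely many choices for each $(n_i, d_i, Z_i, D_i)$, and hence only finitely many isomorphism classes of semi-simple $K_0$-algebras of dimension $n$.

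The only subtle point is making sure the invocations of (a) and (b) are legitimate for $K_0$, not just for $\QQ_p$ directly; but since $K_0$ is a finite extension of $\QQ_p$, it is itself a $p$-adic local field, so both Krasner's lemma and the determination of the Brauer group apply verbatim. The main (and only) non-trivial input is the structure of the Brauer group of a local field, which is classical.
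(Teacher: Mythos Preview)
Your proof is correct and follows essentially the same route as the paper: Artin--Wedderburn reduces the problem to division algebras of bounded dimension, the Hasse invariant (local Brauer group $\cong \QQ/\ZZ$) gives finiteness of central division algebras of bounded degree over a fixed local field, and Krasner's result gives finiteness of extensions of $K_0$ of bounded degree. The paper cites \cite{re75} for the Hasse invariant and \cite{kr66} for the finiteness of extensions, but otherwise the argument is identical to yours.
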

    \begin{proof}
        The Artin-Wedderburn theorem implies that it suffices to prove that there are only finitely many division algebras $A$ of a given dimension $n$ over $K_0$.
        As a $Z(A)$-algebra, a skew-field $A$ is determined by its Hasse invariant (see~\cite[\S 14]{re75}), which can take only finitely many possible values once we fix $\dim_{Z(A)}(A)$.
        Hence we are reduced to showing that there are only finitely many possibilities for the field $Z(A)$, that is, that there are only finitely many field extensions of $K_0$ of degree at most $n$. But it is well known that the number of extensions of $\QQ_p$ of a fixed degree is finite (see~\cite[Th\'{e}or\`{e}me 2]{kr66}, which even gives an explicit formula). Clearly the same holds for extensions of $K_0$, as $K_0$ is of finite degree over $\QQ_p$. This completes the proof.
    \end{proof}

In what follows we denote by $\length_{R}(M)\in \ZZ_{\geq 0}\cup\{\infty\}$, for a commutative local ring $R$ and $R$-module $M$, the length of $M$ as an $R$-module. We will also allow more flexibility for the choice of $K_0$. We will often ask that $K_0/\mathbb Q_p$  be an \emph{extension of discretely valued fields}, which means that it should be a (not necessarily finite) field extension such that the usual discrete (exponential) valuation $\nu_p:\ \mathbb Q_p\longrightarrow \ZZ\subset \QQ$
satisfying $\nu_p(p)=1$ extends to a discrete valuation $K_0\longrightarrow \QQ$, also denoted by $\nu_p$.
It is well known that the valuation on $\mathbb Q_p$ can be extended (even uniquely) to any  algebraic extension of finite degree. But $K/\mathbb Q_p$ is an extension of discretely valued fields as well, after appropriate rescaling of the valuation. To be explicit, we let $\nu_p:\ K \longrightarrow \QQ$ denote the unique discrete valuation on $K$ such that $\nu_p(p)=1$. If we equip $K$ with this valuation, $K/\QQ_p$ becomes an extension of discretely valued fields.

    \begin{prop}
        Let $K_0/\QQ_p$ be an extension of discretely valued fields, let $\cO_0$ be the associated discrete valuation ring, and let $A$ be a split semisimple $K_0$-algebra.
        We have
        $$
            A \cong \bigoplus_{i=1}^n K_0^{d_i\times d_i}
        $$
        for certain $n, d_1,\ldots,d_n\in \NN$. Denote by $\Tr_i:\ A \longrightarrow K_0$ the trace function on the $i^{\nth}$ matrix algebra summand  of $A$.
        Fix elements $u_1,\ldots,u_n\in K_0^\times $ and define
        $$
            T:\ A \times A \longrightarrow K_0:\ (a,b) \mapsto \sum_{i=1}^n u_i\cdot \Tr_i(a\cdot b).
        $$

        If $\Lambda\subset A$ is an $\cO_0$-order such that
        $$
            \Lambda = \Lambda^\sharp := \{ a \in A \ | \ T(a, x)\in \cO_0 \textrm{ for all $x\in \Lambda$} \}
        $$
        and $\Gamma \supseteq \Lambda$ is a maximal $\cO_0$-order in $A$, then
        \begin{equation}\label{eqn length estimate}
            \length_{\cO_0} \Gamma/\Lambda = \frac{1}{2}\cdot \length_{\cO_0}(\cO_0/p\cO_0)\cdot \sum_{i=1}^n d_i^2 \cdot \nu_p(u_i^{-1}).
        \end{equation}
    \end{prop}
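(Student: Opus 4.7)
The plan is to exploit the duality $L \mapsto L^\sharp$ to reduce the calculation to $\length_{\cO_0}(\Gamma/\Gamma^\sharp)$, which can be computed explicitly after reducing to a single matrix block.

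\medskip

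\emph{Step 1 (duality preserves length).}
For full $\cO_0$-lattices $L_2 \subseteq L_1$ in $A$, the form $T$ being non-degenerate induces an isomorphism of $\cO_0$-modules
$$
    L_2^\sharp / L_1^\sharp \ \cong\ \Hom_{\cO_0}(L_1/L_2,\ K_0/\cO_0).
$$
Since Matlis duality preserves $\cO_0$-length on finite length modules, this yields the identity $\length_{\cO_0}(L_1/L_2) = \length_{\cO_0}(L_2^\sharp/L_1^\sharp)$, and in particular the contravariant functor $L \mapsto L^\sharp$ reverses inclusions.

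\medskip

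\emph{Step 2 (halving).}
From $\Lambda\subseteq \Gamma$ we get $\Gamma^\sharp\subseteq\Lambda^\sharp=\Lambda\subseteq\Gamma$. Applying Step 1 to each of the inclusions $\Lambda\subseteq\Gamma$ and $\Gamma^\sharp\subseteq\Lambda$, and using $\Lambda^\sharp=\Lambda$, gives
$$
    \length_{\cO_0}(\Gamma/\Lambda) \ =\ \length_{\cO_0}(\Lambda^\sharp/\Gamma^\sharp)\ =\ \length_{\cO_0}(\Lambda/\Gamma^\sharp).
$$
Additivity of length on the chain $\Gamma^\sharp\subseteq\Lambda\subseteq\Gamma$ then gives
$$
    \length_{\cO_0}(\Gamma/\Gamma^\sharp)\ =\ 2\cdot \length_{\cO_0}(\Gamma/\Lambda).
$$
It therefore suffices to establish formula \eqref{eqn length estimate} with the left-hand side replaced by $\tfrac12\,\length_{\cO_0}(\Gamma/\Gamma^\sharp)$.

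\medskip

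\emph{Step 3 (structure of $\Gamma$ and computation of $\Gamma^\sharp$).}
Because $A = \bigoplus_{i=1}^n K_0^{d_i\times d_i}$ and $\Gamma$ is maximal, $\Gamma$ decomposes as $\bigoplus_i\Gamma_i$ with each $\Gamma_i$ a maximal $\cO_0$-order in $K_0^{d_i\times d_i}$. All such maximal orders are conjugate to $M_{d_i}(\cO_0)$; since $T$ is invariant under inner automorphisms of $A$ (the trace is), we may assume $\Gamma_i = M_{d_i}(\cO_0)$ without loss of generality. A direct computation using the self-dual basis of matrix units with respect to $\Tr_i$ shows that the $\Tr_i$-dual of $M_{d_i}(\cO_0)$ inside $K_0^{d_i\times d_i}$ is again $M_{d_i}(\cO_0)$, and hence with the rescaling by $u_i$,
$$
    \Gamma^\sharp\ =\ \bigoplus_{i=1}^n u_i^{-1}\cdot M_{d_i}(\cO_0).
$$

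\medskip

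\emph{Step 4 (length computation).}
For the quotient of commensurable lattices we use the standard convention
$$
    \length_{\cO_0}(L_1/L_2)\ :=\ \length_{\cO_0}(L_1/(L_1\cap L_2)) - \length_{\cO_0}(L_2/(L_1\cap L_2)),
$$
which is additive over chains and agrees with ordinary length when $L_2\subseteq L_1$. Applied summand by summand, this reduces the computation to
$$
    \length_{\cO_0}\!\bigl(M_{d_i}(\cO_0)/u_i^{-1}M_{d_i}(\cO_0)\bigr) \ =\ d_i^{\,2}\cdot \length_{\cO_0}(\cO_0/u_i^{-1}\cO_0).
$$
Since $\length_{\cO_0}(\cO_0/u_i^{-1}\cO_0) = \nu_p(u_i^{-1})\cdot \length_{\cO_0}(\cO_0/p\cO_0)$ by the definition of the extended valuation $\nu_p$ (with $\nu_p(p)=1$), summing over $i$ and dividing by $2$ yields exactly formula \eqref{eqn length estimate}.

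\medskip

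The main technical point is Step 1, i.e.\ the fact that $\sharp$ preserves length on quotients of commensurable lattices. Everything else is then an essentially bookkeeping matter, provided one sets up the generalised notion of length in Step 4 carefully so that the formula makes sense regardless of the sign of $\nu_p(u_i^{-1})$.
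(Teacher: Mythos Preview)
Your proof is correct and follows essentially the same route as the paper: reduce $\Gamma$ to the standard maximal order $\bigoplus_i M_{d_i}(\cO_0)$ using conjugation-invariance of $T$, compute $\Gamma^\sharp=\bigoplus_i u_i^{-1}M_{d_i}(\cO_0)$ via matrix units, and use the duality pairing (your Matlis-duality formulation is exactly the paper's ``non-degenerate $K_0/\cO_0$-valued pairing between $\Gamma/\Lambda$ and $\Lambda/\Gamma^\sharp$'') to halve $\length_{\cO_0}(\Gamma/\Gamma^\sharp)$. One small remark: your signed-length convention in Step~4 is harmless but unnecessary, since $\Lambda=\Lambda^\sharp\subseteq\Gamma$ already forces $\Gamma^\sharp\subseteq\Lambda\subseteq\Gamma$, so $\nu_p(u_i^{-1})\geq 0$ for every $i$.
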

    \begin{proof}
        All maximal orders in $A$ are conjugate. Moreover, any conjugate of $\Lambda$ is self-dual with respect to the same bilinear form $T$, that is, $(a\Lambda a^{-1})^\sharp=a\Lambda a^{-1}$ for any $a\in A^\times$. This is because the trace functions $\Tr_i$ used in the definition of $T$ are invariant under conjugation. Hence we may assume without loss of generality that $$\Gamma = \bigoplus_{i=1}^n \cO_0^{d_i\times d_i}.$$
        Using the matrix units as an explicit basis of $\Gamma$ we can compute
        $$\Gamma^{\sharp}=\bigoplus_{i=1}^n u_i^{-1} \cdot \cO_0^{d_i\times d_i}.$$
        Moreover, $T$ induces a non-degenerate pairing (with values in $K_0/\cO_0$, the quotient of the underlying additive group of $K_0$ by the underlying additive group of $\cO_0$) between the $\cO_0$-modules $\Gamma/\Lambda$ and $\Lambda^\sharp/ \Gamma^\sharp=\Lambda/\Gamma^\sharp$. It follows that these two $\cO_0$-modules have the same length, which must consequently be exactly half the length of $\Gamma/\Gamma^\sharp$. The asserted formula for the length of $\Gamma/\Lambda$ now follows immediately.
    \end{proof}

    \begin{defi}[Defect of a symmetric order] \label{defi defect}
        Let $K_0/\QQ_p$ be an extension of discretely valued fields and let $\cO_0$ be the associated discrete valuation ring.
        \begin{enumerate}
        \item  Let $A$ be a split semisimple $K_0$-algebra.          We have
                $$
                    A \cong \bigoplus_{i=1}^n K_0^{d_i\times d_i}
                $$
        for certain $n, d_1,\ldots,d_n\in \NN$.
        If $\Lambda\subseteq A$ is a symmetric $\cO_0$-order, then there is a symmetrising form
        $$
            T:\ A \times A \longrightarrow K_0:\ (a,b) \mapsto \sum_{i=1}^n u_i\cdot \Tr_i(a\cdot b)
        $$
        for certain $u_1,\ldots,u_n\in K_0^\times$ such that $\Lambda=\Lambda^\sharp$ (see \cite[Definition (III.1)]{ple83} for an introduction to symmetrising forms from this point of view). We call
        $$
            d= \max_{1\leq i \leq n} \{ -\nu_p(u_i) \}
        $$
        the \emph{defect of $\Lambda$}.
        \item Assume now that $A$ is an arbitrary semisimple $K_0$-algebra and that, as in the previous point, $\Lambda \subseteq A$ is a symmetric $\cO_0$-order. Then there is an algebraic extension  $E_0/K_0$ of finite degree such that $E_0\otimes_{K_0}A$ is split.
        As the extension is of finite degree, the discrete  valuation of $K_0$ extends uniquely to a discrete valuation on $E_0$.
         If $\mathcal E_0$ denotes the valuation ring of $E_0$, then $\cE_0\otimes_{\cO_0} \Lambda$ is an $\cE_0$-order in the split semisimple $E_0$-algebra $E_0\otimes_{K_0}A$, and we define the defect of $\Lambda$ to be equal to the defect of $\cE_0\otimes_{\cO_0}\Lambda$ (which is defined as per the previous point).
        \end{enumerate}
    \end{defi}
    \begin{rem}
        \begin{enumerate}
        \item Note that the defect of a symmetric order $\Lambda$ is well-defined (i.e. independent of the choice of $T$ and the splitting field $E_0$).

        Independently of whether $K_0$ is a splitting field for $A$, a symmetrising form $T$ defines an isomorphism $$\Lambda\longrightarrow \Hom_\cO(\Lambda, \cO):\ a \mapsto T(a,-)$$
        of $\Lambda$-$\Lambda$-bimodules. Such an isomorphism is clearly unique up to an automorphism of $\Lambda$ viewed as a $\Lambda$-$\Lambda$-bimodule, and such automorphisms are given by multiplication by an element of $Z(\Lambda)^\times$.
        So if $T'$ is another symmetrising form for $\Lambda$, then $T'(-,=) =T(z\cdot -, =)$ for some $z \in Z(\Lambda)^\times$. If $K_0$ is a splitting field for $A$, then for all $i$ and all $a,b\in A$ we have $\Tr_i(zab)=z_i\Tr_i(ab)$ for some $z_i\in \cO_0^\times$ (using the notation of Definition~\ref{defi defect}). In particular, the $u_i$ attached to the forms $T$ and $T'$ differ only by an element of $\cO_0^\times$, that is, they have the same valuation.

        The above argument shows that the defect of a symmetric order in a split semisimple algebra is independent of the choice of a symmetrising form. The second part of Definition~\ref{defi defect} defines the defect in the non-split case by passing to a splitting field.
        So assume that we have two different splitting fields $E_0$ and $E_0'$, both of finite degree over $K_0$. We need to show that the defect of $\Lambda$ is independent of whether we use $E_0$ or $E_0'$ as our splitting field in Definition~\ref{defi defect}.
        Fix an algebraic closure $\bar K_0$ of $K_0$.
         We can choose embeddings $i:\ E_0\hookrightarrow \bar K_0$ and $i':\ E_0'\hookrightarrow \bar K_0$.
        Then there is a bigger splitting field $E_0''\subset \bar K_0$ containing both $i(E_0)$ and $i(E_0')$. As the valuation $\nu_p$ on $K_0$ extends uniquely to any finite  algebraic extension, we have $\nu_p(i(x))=\nu_p(x)$ for all $x\in E_0$ (same for $i'$ and $E_0'$). Hence we may replace, without loss of generality, $E_0$ by $i(E_0)$ and $E_0'$ by $i'(E_0')$ and just assume that $E_0$ and $E_0'$ are contained (as discretely valued fields) in $E_0''$.
         The explicit symmetrising forms $T$ and $T'$ we chose over $E_0$ and $E_0'$  both extend linearly to symmetrising forms over $E_0''$. The invariants $u_i$ used in Definition~\ref{defi defect} for $T$ (respectively $T'$) are the same as for the $E_0''$-linear extension of $T$ (respectively $T'$). That is, the defect of $\Lambda$ obtained using the splitting field $E_0$ (respectively $E_0'$) is the same as the one obtained using the splitting field $E_0''$. As we have seen in the previous paragraph that the defect of an order in a split semi-simple algebra over a given field is well-defined, it follows that defect of $\Lambda$ defined using the splitting fields $E_0$ or $E_0'$ is the same.
\item Let $E_0/K_0$ be an extension of discretely valued fields, and let $\mathcal E_0$ and $\cO_0$ denote the corresponding discrete valuation rings. If $\Lambda$ is an $\cO_0$-order in a semisimple $K_0$-algebra, then the defect of the $\cO_0$-order $\Lambda$ is the same as the defect of the  $\mathcal E_0$-order $\mathcal E_0\otimes_{\cO_0}\Lambda$ (this is again easy to see, one just needs to construct a finite splitting extension of $E_0$ containing a finite splitting extension of $K_0$).
\item
 If $e \in \Lambda$ is an idempotent, and $T:\ A \times A \longrightarrow K_0$ is a symmetrising form for $\Lambda$, then $T|_{eAe\times eAe}$ is a symmetrising form for $e\Lambda e$. In particular, if $e$ does not annihilate any non-zero element of $Z(A)$, then $\Lambda$ and $e\Lambda e$ have the same defect (this can be seen by passing to a splitting field). It follows that the basic algebra of $\Lambda$ has the same defect as $\Lambda$, that is, the defect is invariant under Morita equivalence.
%
        \item Let $\Lambda = \cO_0 G$, and assume without loss of generality that $K_0 G$ is split. If $\chi_1,\ldots, \chi_n: K_0G \longrightarrow K_0$ are the (absolutely) irreducible characters of $G$, then $\chi_i=\Tr_i$ (up to permutation of the indices). It is easy to see that
        $\cO_0 G$ is self-dual with respect to the bilinear form
        $T(a, b)=|G|^{-1}\cdot \chi_{\rm reg}(a\cdot b)$, where $\chi_{\rm reg}$ denotes the character of the regular representation of $G$.
        We have $\chi_{\rm reg}= \sum_i \chi_i(1)\cdot \chi_i$, and therefore
        $$
            T(a,b) = \frac{1}{|G|}\cdot \sum_{i=1}^n \chi(1)\cdot \Tr_i(a\cdot b).
        $$
        That is $u_i = |G|^{-1}\cdot \chi_i(1)$. In particular, the defect of $\cO_0G$ is equal to $\nu_p(|G|)$.
        \item If $\Lambda=\cO_0G b$ is a block, then the above reasoning implies
        that the defect of $\Lambda$ in the sense of Definition~\ref{defi defect} is equal to $$\max_{\chi\in \Irr_{\CC}(b)} \{\nu_p (|G|) - \nu_p(\chi(1)) \}$$
        This equals the defect of $\cO_0 G b$ in the ordinary sense (that is, the $p$-valuation of the order of a defect group) since any block contains a character of height zero.
        \end{enumerate}
    \end{rem}

    \begin{prop}\label{prop bound index}
        Let $K_0/\QQ_p$ be an extension of discretely valued fields and let $\cO_0$ be the associated discrete valuation ring.
        Let $A$ be a semisimple $K_0$-algebra and let $\Gamma \subset A$ be a maximal $\cO_0$-order in $A$ (unique up to conjugation).
        If $\Lambda\subseteq \Gamma$ is a symmetric $\cO_0$-order of defect $d$, then
        $$
            \length_{\cO_0} (\Gamma / \Lambda) \leq \frac{1}{2}\cdot e \cdot d\cdot \dim_{K_0} (A)
        $$
        where $e=\length_{\cO_0}(\cO_0/p\cO_0)$.
    \end{prop}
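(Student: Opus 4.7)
The plan is to reduce to the case where $A$ is split semisimple over $K_0$, and then to apply the explicit length formula \eqref{eqn length estimate} from the preceding proposition. In the split case, with $A\cong \bigoplus_{i=1}^n K_0^{d_i\times d_i}$ and symmetrising constants $u_1,\dots,u_n$ attached to $\Lambda$, the definition of defect yields $\nu_p(u_i^{-1})\leq d$ for each $i$, so \eqref{eqn length estimate} gives
\begin{align*}
\length_{\cO_0}(\Gamma/\Lambda)
&= \frac{1}{2}\cdot e\cdot \sum_{i=1}^n d_i^2\cdot \nu_p(u_i^{-1})
\leq \frac{1}{2}\cdot e\cdot d\cdot \sum_{i=1}^n d_i^2
= \frac{1}{2}\cdot e\cdot d\cdot \dim_{K_0}(A),
\end{align*}
which is exactly the claimed inequality.

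For the general (non-split) case I would pick a finite splitting extension $E_0/K_0$, write $\mathcal{E}_0$ for the associated discrete valuation ring and $r:=e(E_0/K_0)$ for the ramification index. Since $\mathcal{E}_0$ is $\cO_0$-flat, $\mathcal{E}_0\otimes_{\cO_0}(\Gamma/\Lambda)\cong (\mathcal{E}_0\otimes_{\cO_0}\Gamma)/(\mathcal{E}_0\otimes_{\cO_0}\Lambda)$. A direct computation on cyclic modules $\cO_0/\pi_{\cO_0}^k$ gives the length identity $\length_{\mathcal{E}_0}(\mathcal{E}_0\otimes_{\cO_0} M) = r\cdot \length_{\cO_0}(M)$ for every finitely generated torsion $\cO_0$-module $M$; in particular $\length_{\mathcal{E}_0}(\mathcal{E}_0/p\mathcal{E}_0)=r\cdot e$. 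The order $\mathcal{E}_0\otimes_{\cO_0}\Gamma$ need not itself be a maximal $\mathcal{E}_0$-order in $E_0\otimes_{K_0}A$, but it is contained in some maximal $\mathcal{E}_0$-order $\Gamma'$, and $\mathcal{E}_0\otimes_{\cO_0}\Lambda$ remains a symmetric $\mathcal{E}_0$-order of defect $d$ by the remark following Definition~\ref{defi defect}.

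Combining these ingredients, the split-case bound of the first paragraph applied to $\mathcal{E}_0\otimes_{\cO_0}\Lambda\subseteq \Gamma'$ inside the split algebra $E_0\otimes_{K_0}A$ yields
\begin{align*}
r\cdot\length_{\cO_0}(\Gamma/\Lambda)
&= \length_{\mathcal{E}_0}\bigl((\mathcal{E}_0\otimes_{\cO_0}\Gamma)/(\mathcal{E}_0\otimes_{\cO_0}\Lambda)\bigr) \\
&\leq \length_{\mathcal{E}_0}\bigl(\Gamma'/(\mathcal{E}_0\otimes_{\cO_0}\Lambda)\bigr)
\leq \frac{1}{2}\cdot r\cdot e\cdot d\cdot \dim_{K_0}(A),
\end{align*}
and dividing by $r$ gives the asserted bound. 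The only real subtlety is the bookkeeping of the ramification index when passing between $\cO_0$ and $\mathcal{E}_0$: the factor $r$ appears symmetrically on the left via the length identity and on the right via $\length_{\mathcal{E}_0}(\mathcal{E}_0/p\mathcal{E}_0)=r\cdot e$, so it cancels cleanly and no trace of the splitting field is left.
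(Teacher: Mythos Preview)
Your argument is correct and follows essentially the same route as the paper: first dispose of the split case directly via equation~\eqref{eqn length estimate} together with $\nu_p(u_i^{-1})\leq d$, then in the non-split case pass to a finite splitting extension, enlarge $\mathcal{E}_0\otimes_{\cO_0}\Gamma$ to a maximal $\mathcal{E}_0$-order, and apply the split bound there. Your bookkeeping of the ramification index (the factor $r$ appearing on both sides and cancelling) is in fact slightly more transparent than the paper's presentation of the same step.
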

    \begin{proof}
        If $A$ is split then this follows immediately from equation \eqref{eqn length estimate}. If $A$ is not split, $E_0$ is a finite extension of $K_0$ which splits $A$ and $\cE_0$ is the integral closure of $\cO_0$ in $E_0$, then
        $$
            \length_{\cO_0}(\Gamma/\Lambda) = f^{-1} \cdot \length_{\cE_0} (\cE_0\otimes_{\cO_0}\Gamma / \cE_0\otimes_{\cO_0}\Lambda) \leq f^{-1} \cdot \length_{\cE_0}(\tilde \Gamma / \cE_0\otimes_{\cO_0}\Lambda)
        $$
        where $\tilde \Gamma$ is a maximal $\cE_0$-order containing
        $\cE_0\otimes_{\cO_0}\Gamma$ and $f=\length_{\cE_0}(\cE_0 / \rad (\cO_0)\cE_0) \geq 1$ (the ramification index of the extension $E_0/K_0$). The right hand side can be bounded using equation~\eqref{eqn length estimate} as before, so
        $$
            \length_{\cO_0}(\Gamma/\Lambda) \leq \frac{1}{2}\cdot f^{-1} \cdot e\cdot d \cdot \dim_{E_0}(E_0\otimes_{K_0}A) \leq \frac{1}{2}\cdot e\cdot d \cdot {\dim_{K_0}(A)}.
        $$
    \end{proof}

    \begin{thm}\label{thm finite class}
        Fix $d, n \in \NN$.
        Up to isomorphism there are only finitely many symmetric $\cO$-orders $\Lambda$ satisfying all of the following:
        \begin{enumerate}
            \item $\dim_K(K\otimes_\cO \Lambda) \leq n$.
            \item The defect of $\Lambda$ is $d$.
            \item $\Lambda \cong \Lambda^{(\sigma)}$ as $\cO$-algebras.
        \end{enumerate}
    \end{thm}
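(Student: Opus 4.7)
The plan is to descend from $\cO$ to the subring $\cO_0 = \cO^{\langle\sigma\rangle}$, whose residue field $k^{\langle\bar{\sigma}\rangle}$ is \emph{finite} since $\bar\sigma$ is a positive power of Frobenius on $k$. Given $\Lambda$ satisfying (1)--(3), hypothesis (3) together with Theorem~\ref{thm ex o0 form} produces an $\cO_0$-subalgebra $\Lambda_0 \subseteq \Lambda$ with $\Lambda \cong \cO \otimes_{\cO_0} \Lambda_0$. Set $A_0 = K_0 \otimes_{\cO_0} \Lambda_0$; then $\dim_{K_0}(A_0) = \dim_K(K\otimes_\cO \Lambda) \leq n$, and $A_0$ is semisimple because $K\otimes_{K_0}A_0 \cong K\otimes_\cO\Lambda$ is and $K/K_0$ is separable. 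A faithfully flat descent argument, using the identification $\Hom_\cO(\cO\otimes_{\cO_0}\Lambda_0, \cO) \cong \cO \otimes_{\cO_0} \Hom_{\cO_0}(\Lambda_0, \cO_0)$ of $\Lambda_0$-bimodules, shows that $\Lambda_0$ is itself a symmetric $\cO_0$-order; by the invariance of defect under base change of the DVR (the remark following Definition~\ref{defi defect}), its defect equals $d$.

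It therefore suffices to show that, for fixed $n$ and $d$, there are only finitely many isomorphism classes of symmetric $\cO_0$-orders $\Lambda_0$ of defect $d$ in semisimple $K_0$-algebras of $K_0$-dimension at most $n$. By Proposition~\ref{prop finitely many cyclotomic split} there are only finitely many possibilities for $A_0 = K_0\otimes_{\cO_0}\Lambda_0$ up to isomorphism, so we may fix one. All maximal $\cO_0$-orders in $A_0$ are conjugate, so $\Lambda_0$ is isomorphic as an $\cO_0$-algebra to a suborder of a fixed maximal order $\Gamma_0$. By Proposition~\ref{prop bound index}, $\length_{\cO_0}(\Gamma_0/\Lambda_0) \leq \tfrac{1}{2}\, e\, d\, n$, where $e=\length_{\cO_0}(\cO_0/p\cO_0)$; this is a bound depending only on $n$, $d$ and $K_0$. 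Hence there is an integer $m$ independent of $\Lambda_0$ with $\pi_0^m \Gamma_0 \subseteq \Lambda_0$, where $\pi_0$ is a uniformiser of $\cO_0$. Since $\cO_0/\pi_0\cO_0 = k^{\langle\bar{\sigma}\rangle}$ is finite, $\Gamma_0/\pi_0^m\Gamma_0$ is a finite set and admits only finitely many $\cO_0$-submodules; this yields only finitely many possibilities for $\Lambda_0$, and then only finitely many $\Lambda \cong \cO\otimes_{\cO_0}\Lambda_0$ up to isomorphism.

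The main obstacle is arranging the descent step so that the finiteness of $k^{\langle\bar{\sigma}\rangle}$ can be exploited: this is what Theorem~\ref{thm ex o0 form} accomplishes, with the caveat that a small verification (the faithfully flat descent above) is needed to show that $\Lambda_0$ inherits symmetry from $\Lambda$, without which the defect bound of Proposition~\ref{prop bound index} does not apply. Once these ingredients are in place, the conclusion reduces to counting $\cO_0$-submodules of bounded colength inside a fixed maximal order, combined with the finiteness of the list of possible semisimple ambient algebras afforded by Proposition~\ref{prop finitely many cyclotomic split}.
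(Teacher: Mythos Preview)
Your proof is correct and follows essentially the same route as the paper's: descend to an $\cO_0$-form $\Lambda_0$ via Theorem~\ref{thm ex o0 form}, use Proposition~\ref{prop finitely many cyclotomic split} to pin down finitely many ambient $K_0$-algebras, and then invoke Proposition~\ref{prop bound index} together with the finiteness of the residue field of $\cO_0$ to bound the number of suborders of a fixed maximal order. You are in fact slightly more careful than the paper in that you flag (and sketch a faithfully-flat descent argument for) the fact that $\Lambda_0$ inherits symmetry and defect $d$ from $\Lambda$, which the paper's proof uses without comment; your final counting via submodules of $\Gamma_0/\pi_0^m\Gamma_0$ is a cosmetic variant of the paper's count of kernels of homomorphisms onto torsion $\cO_0$-modules of bounded length.
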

    \begin{proof}
        Define $K_0=K^{\langle\sigma\rangle}$ and $\cO_0=\cO^{\langle\sigma\rangle}$.
        By Theorem~\ref{thm ex o0 form} any $\Lambda$ satisfying the conditions above has an $\cO_0$-form $\Lambda_0$. By Proposition~\ref{prop finitely many cyclotomic split} there are only finitely many $K_0$-algebras which can occur as the $K_0$-span of $\Lambda_0$. Hence it suffices to show that any semisimple $K_0$-algebra $A_0$ contains only finitely many isomorphism classes of symmetric $\cO_0$-orders of defect $d$.

        The algebra $A_0$ contains a maximal order $\Gamma_0$ which is unique up to conjugation. By Proposition~\ref{prop bound index} the $\cO_0$-length of the quotient $\Gamma_0/\Lambda_0$ for a symmetric $\cO_0$-order $\Lambda_0\subseteq \Gamma_0$  of defect $d$ is bounded by ${\frac{1}{2}\cdot e\cdot d \cdot n}$, where $e=\length_{\cO_0}(\cO_0/p\cO_0)$.  Now we just need to realise that there are only finitely many isomorphism classes of $\cO_0$-modules of length smaller than this bound (as the residue field of $\cO_0$ is finite), and for each of these (torsion) $\cO_0$-modules the set of $\cO_0$-homomorphisms from $\Gamma_0$ onto the module is a finite set. Any $\Lambda_0$ occurs as the kernel of such a homomorphism, which proves that there are only finitely many possibilities.
    \end{proof}

    \begin{thm}\label{thm finite blocks}
        Let $c,d,m \in \NN$.
        There are only finitely many Morita equivalence classes of blocks of finite groups (defined over $\cO$) such that
        \begin{enumerate}
            \item The sum of all entries of the Cartan matrix of the block is bounded by $c$.
            \item The defect of the block is equal to $d$.
            \item The $\cO$-Morita-Frobenius number of the block is bounded by $m$.
        \end{enumerate}
    \end{thm}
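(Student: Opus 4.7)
The plan is to reduce the statement to Theorem~\ref{thm finite class} by passing to basic algebras. Given a block $B$ satisfying the three hypotheses, let $\Lambda = \Lambda_B$ denote a basic algebra of $B$. Two blocks of finite groups are Morita equivalent as $\cO$-algebras if and only if their basic algebras are isomorphic as $\cO$-algebras, so it suffices to show that only finitely many isomorphism classes of such $\Lambda$ arise.

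I will verify that $\Lambda$ satisfies the three hypotheses of Theorem~\ref{thm finite class} for some $\sigma$ depending on $B$. First, $\Lambda$ is a symmetric $\cO$-order: $\cO G$ and hence $B$ is symmetric, and this descends to $\Lambda \cong e B e$ via the restriction of the symmetrising form, where $e$ is a sum of primitive idempotents representing each isomorphism class of indecomposable projective $B$-module. Second, the $\cO$-rank of $\Lambda$ equals $\dim_k(k\Lambda) = \sum_{i,j} c_{i,j}$, the sum of the Cartan invariants of $B$, so $\dim_K(K\otimes_\cO \Lambda) \leq c$. Third, by part (3) of the Remark following Definition~\ref{defi defect}, the defect is Morita invariant, so $\Lambda$ has defect $d$. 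Finally, the hypothesis $\mfO(B) \leq m$ combined with the Morita invariance of $\mfO$ from Proposition~\ref{morita_O_proposition}(ii) yields $\mfO(\Lambda) \leq m$, so $\Lambda \sim_{\operatorname{Mor}} \Lambda^{(p^n)}$ for some $n \in \{0,1,\ldots,m\}$.

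Since the Galois twist preserves the underlying ring structure, $\Lambda^{(p^n)}$ is again a basic $\cO$-algebra; and two Morita equivalent basic $\cO$-algebras are isomorphic (because the basic algebra of a ring is determined up to isomorphism by its Morita equivalence class). Therefore $\Lambda \cong \Lambda^{(\sigma_n)}$, where $\sigma_n \in \Gal(K/\QQ_p)$ is any fixed lift of the $p^n$-power Frobenius satisfying $\sigma_n(\pi)=\pi$. For each $n \in \{0,1,\ldots,m\}$, Theorem~\ref{thm finite class} applied with $\sigma = \sigma_n$ gives only finitely many isomorphism classes of symmetric $\cO$-orders with dimension at most $c$, defect $d$, and invariance under $\sigma_n$. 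Taking the union over the finite range of $n$ yields finitely many isomorphism classes of basic algebras $\Lambda_B$, and thus finitely many Morita equivalence classes of blocks $B$.

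The main technical point requiring care is the step upgrading a Morita equivalence $\Lambda \sim_{\operatorname{Mor}} \Lambda^{(p^n)}$ to an honest $\cO$-algebra isomorphism, since Theorem~\ref{thm finite class} requires the latter. This relies on basic-ness being preserved by Galois twists (which alter only the scalar action, not the underlying ring or its primitive idempotents) together with the fact that Morita equivalent basic $\cO$-algebras are isomorphic. The remaining verifications that the invariants controlling Theorem~\ref{thm finite class} behave correctly under Morita equivalence are essentially bookkeeping, drawing on results already established earlier in the paper.
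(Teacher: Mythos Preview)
Your proof is correct and follows essentially the same route as the paper's: pass to basic algebras, verify the hypotheses of Theorem~\ref{thm finite class}, and upgrade the Morita equivalence $\Lambda\sim_{\operatorname{Mor}}\Lambda^{(p^n)}$ to an isomorphism via basic-ness. The only cosmetic difference is that the paper uses a single $\sigma$ lifting the $\lcm(1,\ldots,m)$-th power of Frobenius, so that one application of Theorem~\ref{thm finite class} covers all blocks at once, whereas you take a finite union over the possible values of $n$ (note that $n=0$ does not actually arise, since $\mfO$ is a positive integer by definition, so Theorem~\ref{thm finite class}, which requires $\sigma$ to induce a nontrivial automorphism of $k$, applies in each case).
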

    \begin{proof}
        Consider the basic algebra of such a block, note that this is also symmetric with the same defect. The bound on the Cartan numbers implies a bound on the dimension of the $K$-span of the basic algebra. Moreover, a Morita equivalence of blocks corresponds to an isomorphism of basic algebras.
        Let $n$ denote the least common multiple of the integers between $1$ and $m$, and let $\sigma$ be a lift of the $n^{\nth}$ power of the Frobenius automorphism of $k$.
         Any basic algebra $\Lambda$ of a block satisfying the third condition will satisfy $\Lambda \cong {\Lambda^{(\sigma)}}$, since $n$ is a multiple of the $\cO$-Morita-Frobenius number of $\Lambda$. It follows that the collection of basic algebras of the blocks satisfying the three conditions satisfies the conditions of Theorem~\ref{thm finite class} (for the chosen $\sigma$). Hence this collection contains only finitely many isomorphism classes of orders, which implies the assertion of the theorem.
    \end{proof}

\begin{cor}
\label{kessar_analogue}
Let $\cX$ be a collection of $\cO$-blocks of finite groups and let $P$ be a finite $p$-group. The following are equivalent:
\begin{enumerate}
\item Conjecture \ref{Donovan:conj} holds for $P$ for blocks in $\cX$, that is, there are only finitely many Morita equivalence classes amongst the blocks in $\cX$ with defect group isomorphic to $P$.

\item Conjectures \ref{Weak_Donovan:conj} and \ref{O-Morita-Frobenius:conj} both hold for $P$ for blocks in $\cX$.

\end{enumerate}
\end{cor}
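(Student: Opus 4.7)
The plan is to deduce this corollary directly from Theorem~\ref{thm finite blocks} combined with the Morita-invariance statements in Proposition~\ref{morita_O_proposition}. Since all the heavy lifting has already been done in this section, both directions are essentially bookkeeping.

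For the implication (1)$\Rightarrow$(2), I would argue by Morita invariance. If there are only finitely many Morita equivalence classes among blocks in $\cX$ with defect group isomorphic to $P$, then the Cartan matrix (being determined by the basic algebra, hence a Morita invariant) takes only finitely many values on these classes, so its entries are uniformly bounded. Similarly, Proposition~\ref{morita_O_proposition}(ii) tells us that $\mfO(B)$ is a Morita invariant, so it too takes only finitely many values. Taking the maxima of these finite sets supplies the functions $c(P)$ and $t(P)$ required by Conjectures~\ref{Weak_Donovan:conj} and~\ref{O-Morita-Frobenius:conj}.

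For the implication (2)$\Rightarrow$(1), the plan is to verify the three hypotheses of Theorem~\ref{thm finite blocks}. The defect is fixed at $d=\log_p |P|$ by assumption. The bound on $\mfO$ is provided directly by Conjecture~\ref{O-Morita-Frobenius:conj}. To bound the \emph{sum} of Cartan entries rather than individual entries, I would invoke the Brauer--Feit bound $l(B) \leq k(B) \leq \tfrac{1}{4}|P|^2 + 1$, which depends only on $|P|$; combined with the per-entry bound $c(P)$ from Conjecture~\ref{Weak_Donovan:conj}, this yields a bound on the sum of all $l(B)^2$ Cartan entries depending only on $P$. Theorem~\ref{thm finite blocks} then produces finitely many Morita equivalence classes among the relevant blocks, which is exactly Conjecture~\ref{Donovan:conj} for $P$ on $\cX$.

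No serious obstacles remain at this stage, since the main technical work has already been absorbed into Theorem~\ref{thm finite blocks} (and, via it, into the Lang-type argument of Proposition~\ref{prop lang over o} together with the descent to an $\cO_0$-form in Theorem~\ref{thm ex o0 form}). The only point requiring any care is the passage from a per-entry Cartan bound to a bound on the sum, which is handled above by Brauer--Feit.
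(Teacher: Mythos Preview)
Your proposal is correct and follows essentially the same route as the paper's own proof, which is a one-line appeal to Theorem~\ref{thm finite blocks} together with Proposition~\ref{morita_O_proposition}. You have simply made explicit the details the paper leaves to the reader, including the use of Brauer--Feit to pass from a per-entry Cartan bound to a bound on the sum of entries (a point the paper invokes elsewhere but does not spell out here).
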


\begin{proof}
By Proposition \ref{morita_O_proposition}, which relates the strong $\cO$-Frobenius number to the $\cO$-Morita-Frobenius number, this follows from Theorem \ref{thm finite blocks}.
\end{proof}


\section{Reductions for Donovan's conjecture}
\label{reductions}

The general strategy for the reduction for Donovan's conjecture is the same as that in~\cite{el18b}, where the reduction proceeds in two steps. First it is shown that it suffices to consider \emph{reduced pairs}, and then it is shown that in order to demonstrate the conjecture for reduced pairs, we need only consider quasisimple groups. In~\cite{el18b} the first part of the reduction, to reduced pairs, could only be achieved over $k$ since it relied on the results of~\cite{ku95}. However the analogue of the results of~\cite{ku95} has since been shown by the second author. The following comes from~\cite[Corollary 4.18]{eis18}.

\begin{thm}[\cite{eis18}]
\label{Kulshammer_O:theorem}
Let $P$ be a finite $p$-group. Given a finite group $G$ and a block $B$ of $\cO G$ with defect group $D \cong P$ covering a block $C$ of $\cO H$ where $H=\langle D^h:h \in H \rangle$, there are only finitely many possibilities for the Morita equivalence class of $B$ given that of $C$.
\end{thm}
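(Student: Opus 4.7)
The plan is to upgrade K\"ulshammer's classical reduction from $k$ to the $\cO$-setting. The high-level strategy is to identify $B$, up to Morita equivalence, with a crossed product of $C$ by the finite group $\bar G = G/H$, and then to show that only finitely many such crossed products can occur once the Morita class of $C$ is fixed. As a first reduction I would apply a Fong--Reynolds argument over $\cO$ to replace $G$ by the stabiliser of $C$ in $G$, so that we may assume $C$ is $G$-stable; this step transfers to $\cO$ essentially verbatim, reducing the question to the $G$-invariant case.

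The central technical step is to develop an intrinsic theory of $\cO$-linear Morita self-equivalences of $C$, packaged as an $\cO$-Picard group $\Pic_\cO(C)$ and its outer quotient $\Out_\cO(C) = \Pic_\cO(C)/\Inn(C)$, and then to prove a Puig--Dade-type theorem over $\cO$: once $C$ is $G$-stable, $B$ is Morita equivalent to a crossed product $C *_\alpha \bar G$ specified by a group homomorphism $\bar G \to \Out_\cO(C)$ together with a $2$-cocycle $\alpha$ measuring the obstruction to lifting this homomorphism back to $\Pic_\cO(C)$. The key finiteness input here is that $\Pic_\cO(C)$ modulo the inner automorphisms coming from $D^{\times}$ is finite, and that this finite quotient depends only on the Morita class of $C$; this Picard-finiteness statement for $\cO$-orders is the principal new ingredient compared with the $k$-case, where modular unit-group methods are available.

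To close the argument, I would use the hypothesis $H = \langle D^h : h\in H\rangle$ to constrain the image of $\bar G$ in $\Out_\cO(C)$: because all $H$-conjugates of the defect group already live in $H$, the $p$-local behaviour of $B$ is absorbed inside $C$, and the action of $\bar G$ on $C$ is forced to factor through a $p'$-quotient of $\Out_\cO(C)$ modulo $\Inn(D)$, whose order is bounded purely in terms of the Morita class of $C$. Once such a bound is in hand, the number of homomorphisms $\bar G \to \Out_\cO(C)$ that can arise is finite, and for each such homomorphism the relevant second cohomology group classifying the cocycles $\alpha$ modulo coboundaries is finite, so only finitely many Morita classes of crossed products $C *_\alpha \bar G$ occur. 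The main obstacle is the Picard-finiteness step: the classical proofs over $k$ rely on unit-group calculations in modular group algebras which have no direct $\cO$-analogue and must be replaced by a genuinely Morita-theoretic framework in terms of invertible bimodules, and this upgrade is the core technical content of \cite{eis18}.
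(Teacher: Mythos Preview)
The paper does not give its own proof of this statement: it is quoted as \cite[Corollary~4.18]{eis18} and used as a black box. Your sketch correctly identifies the architecture of Eisele's argument in \cite{eis18}: reduce to the $G$-stable case, interpret $B$ (up to Morita equivalence) as a crossed product of $C$ by $G/H$ governed by a homomorphism into the outer Picard group of $C$ together with a $2$-cocycle, and then invoke a finiteness theorem for $\Pic_{\cO}(C)$ to bound the number of such crossed products. You are also right that the Picard-finiteness over $\cO$ is the new technical core replacing the algebraic-group/Lang-type arguments available over $k$ in K\"ulshammer's original proof.

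One point to tighten: the finiteness result in \cite{eis18} is that $\Pic_{\cO}(C)$ itself is finite for a block $C$ over $\cO$ (with algebraically closed residue field), not merely a quotient ``modulo inner automorphisms coming from $D^{\times}$''. Your formulation suggests a weaker statement and an extra step involving $D$ that is not needed here. Likewise, the clause about the action of $\bar G$ being ``forced to factor through a $p'$-quotient of $\Out_\cO(C)$ modulo $\Inn(D)$'' is not the mechanism used: once $\Pic_{\cO}(C)$ is finite, the hypothesis $H=\langle D^h:h\in H\rangle$ bounds $[G:H]$ in terms of data depending only on $C$ (since $G/H$ embeds into the finite outer Picard group), and then the finiteness of the relevant $H^2$ is automatic. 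So your outline is correct in spirit and matches the cited approach, but the precise role of $D$ and the exact form of the finiteness input should be adjusted.
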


Recall that for a finite group $G$, a normal subgroup $N$ and a $G$-stable block $b$ of $\cO N$, we define $G[kb]$ to be the normal subgroup of $G$ consisting of elements which act as inner automorphisms on $kb$. By~\cite[Propisition 3.1]{el18b}, if $b$ is covered by a block of $\cO G$ with abelian defect group $D$, then $D \leq G[kb]$.

We recall the definition and some properties of the generalized Fitting subgroup $F^*(H)$ of a finite group $H$. Details may be found in~\cite{asc00}. A \emph{component} of $H$ is a subnormal quasisimple subgroup of $H$. Distinct components of $H$ commute and so if $L_1,L_2$ are two components of $H$ then $L_1\cap L_2\subseteq Z(L_1)$. We define the \emph{layer} $E(H)$ of $H$ to be the normal subgroup of $H$ generated by the components. It is a central product of the components. The \emph{Fitting subgroup} $F(H)$ is the largest nilpotent normal subgroup of $H$, and this is the direct product of $O_r(H)$ for all primes $r$ dividing $|H|$. The \emph{generalized Fitting subgroup} $F^*(H)$ is $E(H)F(H)$. An important property $F^*(H)$ is that $C_H(F^*(H)) \leq F^*(H)$, so $G/F^*(H)$ may be viewed as a subgroup of $\Out(F^*(H))$.

Our definition of reduced pairs is as in~\cite{el18b}:

\begin{defi}
Let $G$ be a finite group and $B$ a block of $\cO G$ with defect group $D$. We call $(G,B)$ a \emph{reduced pair} if it satisfies the following:
\begin{enumerate}
\item[(R1)] $F(G)=Z(G)=O_p(G)O_{p'}(G)$;
\item[(R2)] $O_{p'}(G) \leq [G,G]$;
\item[(R3)] Every component of $G$ is normal in $G$;
\item[(R4)] If $L \leq G$ is a component, then $L \cap D \nsubseteq Z(L)$;
\item[(R5)] $G= \langle D^g:g \in G \rangle$;
\item[(R6)] If $H$ is any characteristic subgroup of $G$, then $B$ covers a unique (i.e., $G$-stable) block $b$ of $\cO H$ and further $G[kb]=G$.
\end{enumerate}
\end{defi}

We now give the first part of the reduction, which is analogous to~\cite[Proposition 3.4]{el18b} and based on~\cite{du04}. We give a proof here for completeness.

\begin{prop}
\label{duvel}
Let $P$ be an abelian $p$-group for a prime $p$. In order to verify Donovan's conjecture for $P$, it suffices to verify that there are only a finite number of Morita equivalence classes of blocks $B$ of $\cO G$ with defect group $D \cong P$ occurring in reduced pairs $(G,B)$.
\end{prop}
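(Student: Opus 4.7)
The plan is to follow the argument of \cite[Proposition 3.4]{el18b} essentially verbatim, with the key observation that the only step in that proof which was truly specific to the field setting was the appeal to the main theorem of \cite{ku95}, and this has now been replaced by Theorem~\ref{Kulshammer_O:theorem}. All other ingredients---Fong--Reynolds reduction, Clifford theory, manipulation of central subgroups, and the use of \cite[Proposition 3.1]{el18b}---are formulated naturally over $\cO$ and preserve $\cO$-Morita equivalence, so they carry over without modification.

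Starting from an arbitrary pair $(G,B)$ with $B$ an $\cO$-block of defect group $D \cong P$, I would construct a sequence of successor pairs $(G_i, B_i)$, each either Morita equivalent to its predecessor or related to it by a correspondence that is finite-to-one on Morita equivalence classes, terminating at a reduced pair. Condition (R5) is enforced by replacing $G$ by $H = \langle D^g : g \in G\rangle$ and $B$ by a covered block of $\cO H$: Theorem~\ref{Kulshammer_O:theorem} is precisely the statement that this step is finite-to-one on Morita classes. Conditions (R1) and (R2) are imposed through the standard central extension reductions (factoring out central $p'$-subgroups, absorbing central $p$-subgroups into the defect group) combined with Fong--Reynolds. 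Conditions (R3) and (R4) are obtained by applying Clifford theory to the normal subgroup $E(G)$: non-normal components are dealt with by passing to the stabiliser of an orbit, and any component $L$ with $L \cap D \subseteq Z(L)$ contributes only central factors and can be quotiented out, both moves preserving $\cO$-Morita equivalence crucially because $D$ is abelian. Finally, (R6) is enforced by running Fong--Reynolds over each characteristic subgroup $H$ of $G$ to make the covered block $b$ of $\cO H$ unique (i.e. $G$-stable), and then invoking Theorem~\ref{Kulshammer_O:theorem} together with \cite[Proposition 3.1]{el18b}---which applies because $D$ is abelian and guarantees $D \leq G[kb]$---to pass from $G$ down to $G[kb]$ with only finitely many resulting Morita classes.

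The main obstacle is organisational rather than substantive: enforcing one of (R1)--(R6) can a priori destroy a condition established earlier, so the reductions must be executed in a carefully chosen order and possibly iterated. Following \cite[Proposition 3.4]{el18b} one checks that each reduction either strictly decreases a monotone invariant (for example $|G|$, or the number of characteristic subgroups of $G$ which carry a non-$G$-stable covered block of $B$) or is visibly finite-to-one with bound depending only on $|P|$, so the iteration terminates after boundedly many steps. Because every intermediate reduction is finite-to-one on Morita equivalence classes, the finiteness of Morita classes among reduced pairs with defect group $\cong P$ implies the finiteness of Morita classes among all $\cO$-blocks with defect group $\cong P$, which is Donovan's conjecture for $P$.
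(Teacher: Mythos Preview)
Your high-level strategy---transport \cite[Proposition~3.4]{el18b} to $\cO$ by replacing the appeal to \cite{ku95} with Theorem~\ref{Kulshammer_O:theorem}---is exactly what the paper does. However, your account of the internal mechanics of that argument is inaccurate in a way that matters.

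The substantive gap is the complete absence of the K\"ulshammer--Puig reduction \cite{kp90}. In the paper (and in \cite{el18b}) the core engine is an alternation of two processes: (a) Fong--Reynolds to force quasiprimitivity, and (b) whenever a normal subgroup $N \not\leq (Z(G)\cap[G,G])O_p(G)$ carries a nilpotent covered block, invoke \cite{kp90} (via \cite[Proposition~2.2]{ekks14}) to replace $(G,B)$ by a Morita-equivalent block of a strictly smaller central extension. Conditions (R1), (R2) and, crucially, (R4) are all consequences of having run process (b) to completion: a component $L$ with $D\cap L \leq Z(L)$ covers a nilpotent block, and the product of its $H$-orbit is a normal subgroup with a nilpotent covered block, contradicting the output of (b). Your proposed alternative for (R4)---that such a component ``contributes only central factors and can be quotiented out''---is not correct: $L$ is quasisimple and not central in $G$, and there is no direct mechanism to excise it while preserving Morita equivalence short of the K\"ulshammer--Puig machinery. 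Likewise your description of (R1)--(R2) as ``factoring out central $p'$-subgroups'' misses the point; the content is forcing $O_{p'}(G)\leq Z(G)$, which again comes from (b).

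A secondary point: process (b) can \emph{increase} $|G|$ (the central extension $\tilde L$ may be larger than $L$), so neither $|G|$ nor the count of bad characteristic subgroups works as the monotone invariant. The paper uses the lexicographic pair $([G:O_{p'}(Z(G))],|G|)$, and the inequality $[\tilde L:O_{p'}(Z(\tilde L))]<[G:O_{p'}(Z(G))]$ is what guarantees termination.
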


\begin{proof}
Fix a finite abelian $p$-group $P$.

Consider pairs $([G:O_{p'}(Z(G))],|G|)$ with the lexicographic ordering, where $G$ is a finite group. We first use two processes, labelled (a) and (b), which we apply alternately to $\cO$-blocks of finite groups with defect groups isomorphic to $P$. Both processes strictly reduce $([G:O_{p'}(Z(G))],|G|)$ when applied non-trivially, hence after repeated application must terminate.

Let $G$ be a finite group and $B$ be a block of $\cO G$ with defect group $D \cong P$.

\begin{enumerate}[(a)]
\item Suppose $N \lhd G$ and $b$ is a block of $\cO N$ covered by $B$. Write $I=I_G(b)$ for the stabilizer of $b$ in $G$, and $B_I$ for the Fong-Reynolds correspondent. Now $B_I$ is Morita equivalent to $B$ and they have isomorphic defect groups. Since $O_{p'}(Z(G)) \leq O_{p'}(Z(I))$, if $I \neq G$, then $[I:O_{p'}(Z(I))]<[G:O_{p'}(Z(G))]$. Process (a) involves replacing $B$ by $B_I$ and repeating the process until $B$ is necessarily quasiprimitive.
\item Assume that process (a) has been performed, which means that $B$ is quasiprimitive. Let $N \lhd G$ such that $N \not\leq (Z(G) \cap [G,G])O_p(G)$, and suppose that $B$ covers a nilpotent block $b$ of $N$. Let $b'$ be a block of $Z(G)N$ covered by $B$ and covering $b$. Since $B$ is quasiprimitive both $b$ and $b'$ are $G$-stable. Further $b'$ must also be nilpotent. Using the results of~\cite{kp90}, as outlined in~\cite[Proposition 2.2]{ekks14}, $B$ is Morita equivalent to a block $\tilde{B}$ of a central extension $\tilde{L}$ of a finite group $L$ by a $p'$-group (which further is contained in the derived subgroup $[\tilde{L},\tilde{L}]$) such that there is an $M \lhd \tilde{L}$ with $M \cong D \cap (Z(G)N)$, $G/Z(G)N \cong \tilde{L}/Z(\tilde{L})M$, and $\tilde{B}$ has defect group isomorphic to $D$. Note that $[\tilde{L}:O_{p'}(Z(\tilde{L}))] \leq |L| = [G:Z(G)N]|D \cap (Z(G)N)| < [G:O_{p'}(Z(G))]$ and that $M \leq (Z(\tilde{L}) \cap [\tilde{L},\tilde{L}])O_p(\tilde{L})$. Process (b) consists of replacing $G$ by $\tilde{L}$ and $B$ by $\tilde{B}$.
\end{enumerate}


By repeated application of (a) and (b) to all blocks of all normal subgroups we have that $B$ is Morita equivalent to a quasiprimitive block $C$ (i.e., every covered block of every normal subgroup is stable) of a finite group $H$ in which $N \leq Z(H)O_p(H)$ and $O_{p'}(Z(H)) \leq [H,H]$ whenever $C$ covers a nilpotent block of a normal subgroup $N$ of $H$. Hence in order to prove Donovan's conjecture it suffices to consider such blocks.

Let $G$ be a finite group and $B$ a quasiprimitive block of $\cO G$ with defect group $D \cong P$ such that $N \leq Z(G)O_p(G)$ whenever $B$ covers a nilpotent block of a normal subgroup $N$ of $G$.

Let $H=\langle D^g: g \in G \rangle \lhd G$. Let $C$ be the unique block of $\cO H$ covered by $B$. If $N$ is a characteristic subgroup of $H$, then $N \lhd G$ and if $b$ is a block of $N$ covered by $C$, then $b$ is covered by $B$. Hence if $b$ is a block of a characteristic subgroup of $H$ covered by $C$, then $b$ is $G$-stable. Further, $D \leq G[kb] \lhd G$ by~\cite[Proposition 3.1]{el18b}, so $H \leq G[kb]$.  If further $b$ is nilpotent, then $N \leq Z(G)O_p(G)$ (and $N \leq Z(H)O_p(H)$).

We claim that $(H,C)$ is reduced. Note that we have already shown that it satisfies (R2), (R5) and (R6). Since $D$ is abelian and contains any normal $p$-subgroup of $H$ (and so in particular $O_p(H)$) we have $D \leq C_H(O_p(H)) \lhd H$, so $C_H(O_p(H))=H$, i.e., $O_p(H) \leq Z(H)$. Since also $O_{p'}(H) \leq Z(H)$ by application of (b) to $O_{p'}(H)$ we have that (R1) holds.

Write $L_1,\ldots,L_t$ for the components of $H$, so $E(H)=L_1\cdots L_t \lhd H$. Note that $H$ permutes the $L_i$. If there are no components, then $F^*(H)=Z(H)O_p(H)$ by (R1), so $D \leq C_H(F^*(H)) \leq F^*(H)=Z(H)O_p(H)$ (since $O_p(H) \leq D$ and $D$ is abelian) and $D \lhd H$, so that $H=D$, and (R3), (R4) hold. Suppose that there is at least one component.

We claim that we cannot have $D \cap L_j \leq Z(L_j)$ for any $j$. Write $L=E(H)$ and $M:=\overline{L}_1 \times \cdots \times \overline{L}_t$, where $\overline{L}_i:=L_iO_p(H)/O_p(H)$. Write $C_L$ for the unique block of $\cO L$ covered by $C$ and $\overline{C}_L$ for the unique block of $\overline{L}:=LO_p(H)/O_p(H)$ corresponding to $C_L$. There is a $p'$-group $W \leq Z(M)$ and a block $C_M$ of $\cO M$ with $W$ in its kernel such that $\overline{L} \cong M/W$ and $C_M$ is isomorphic to $\overline{C}_L$. Then $D \cap L$ is a defect group for $C_L$, $(D \cap L)O_p(H)/O_p(H)$ is a defect group for $\overline{C}_L$ and $C_M$ has defect groups isomorphic to $(D \cap L)O_p(H)/O_p(H)$. Write $c_i$ for the unique block of $L_i$ covered by $C_L$ and $\overline{c}_i$ for the unique block of $\overline{L}_i$ corresponding to $c_i$. Then $\overline{c}_i$ has defect group $D_i= ((D \cap L)O_p(H)/O_p(H)) \cap \overline{L}_i$. We have that $C_M=\overline{c}_1 \otimes \cdots \otimes \overline{c}_t$ and $C_M$ has defect group $D_1 \times \cdots \times D_t$. Let $J \subseteq \{1,\ldots,t\}$ correspond to the orbit of $L_j$ under the permutation action of $H$ on the components. Suppose that $D \cap L_j \leq Z(L_j)$ for some $j$, so $c_j$ is nilpotent. Define $L_J \lhd H$ to be the product of the $L_i$ for $i \in J$, and write $c_J$ for the unique block of $L_J$ covered by $C_L$. Now the unique block $\overline{c}_J$ of $L_JO_p(G)/O_p(G)$ corresponding to $c_J$ is isomorphic to a block of $\bigtimes_{i \in J} \overline{L}_i$ with a central $p'$-group in the kernel. Products of nilpotent blocks are nilpotent, so $\overline{c}_J$ is nilpotent. Since $O_p(G) \leq Z(G)$, by~\cite{wa94} $c_J$ is also a nilpotent block, of a nonsolvable normal subgroup covered by $C$, a contradiction. Hence for all $j$ we have $D \cap L_j \nsubseteq Z(L_j)$, so (R4) holds for $(H,C)$.

Conjugation induces a permutation action on the components, hence a homomorphism $\varphi:H \rightarrow S_t$. Let $g\in D$ and say $L_i^g=L_j$ for some $i\neq j$. Since $D \cap L_i \nsubseteq Z(L_i)$ and $L_i\cap L_j\subseteq Z(L_i)$ we have a contradiction and so $D \leq \ker(\varphi)$. Now (R5) implies that $\ker (\varphi)=H$, i.e., (R3) holds for $(H,C)$, and $(H,C)$ is reduced.

Finally, by Theorem \ref{Kulshammer_O:theorem} for a fixed Morita equivalence class for $C$, there are only finitely many possibilities for the Morita equivalence class of $B$, and we are done.
\end{proof}

In the second part of the reduction, from reduced pairs to blocks of quasisimple groups, we first show that in order to bound the strong $\cO$-Frobenius number for reduced pairs it suffices to bound it for quasisimple groups. This is already given in~\cite{el18b}:

\begin{lem}[Lemma 3.5 of~\cite{el18b}]
\label{SOF_for_reduced_pairs}
If there is a function $s:\mathbb{N}\to\mathbb{N}$ such that for all $\cO$-blocks $B$ of quasisimple groups with abelian defect groups of order $p^d$, $\SOF(B) \leq s(d)$, then there is a function $r:\mathbb{N}\to\mathbb{N}$ such that for all reduced pairs $(G,B)$ of a finite group $G$ and a block  $B$ of $\cO G$ with abelian defect groups of order $p^d$ we have $\SOF(B) \leq r(d)$.
\end{lem}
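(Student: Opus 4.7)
The plan is to peel off the generalized Fitting subgroup $F=F^*(G)=Z(G)E(G)$, control each piece using the structural conditions (R1)--(R6), and then reassemble. Let $b$ denote the unique $G$-stable block of $\cO F$ covered by $B$; this exists by (R6). The strategy splits into three steps: (i) bound $\SOF(b)$ using the quasisimple hypothesis, (ii) bound $[G:F]$ via (R3) and the Schreier conjecture, and (iii) lift a Frobenius isomorphism of $b$ to one of $B$ exploiting $G[kb]=G$ from (R6).

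For (i), write $E(G)=L_1\cdots L_t$ as a central product of components. By (R4), $D\cap L_i\not\leq Z(L_i)$ for every $i$, so each component absorbs at least one factor $p$ of $|D|$, forcing $t\leq d$. The block $b$ decomposes, modulo the central contributions controlled by (R1) and (R2), as a central quotient of a tensor product of blocks $b_i$ of the quasisimple groups $L_i$, each with abelian defect of order $p^{d_i}$ where $d_i\leq d$. By Proposition~\ref{morita_O_proposition}(ii) the $\SOF$ is Morita invariant, and it behaves well under central tensor products (the $\SOF$ of the tensor product divides the lcm of the $\SOF$ of the factors, adjusted by a factor depending only on the exponent of the bounded central kernel). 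Applying the hypothesis $\SOF(b_i)\leq s(d_i)$ to each component then yields a bound $\SOF(b)\leq r_1(d)$.

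For (ii) and (iii), the inclusion $C_G(F)\leq F$ embeds $G/F$ into $\Out(F)$; by (R3) the components are not permuted, so $G/F$ maps into a product of $\Out(L_i)$ together with automorphisms of $Z(G)$. By the Schreier conjecture each $\Out(L_i)$ acting faithfully on a block of defect at most $p^d$ is solvable of order bounded in terms of $d$, and the $Z(G)$-contribution is controlled since $O_p(G)\leq D$ and $O_{p'}(G)\leq[G,G]$ by (R1)--(R2); hence $[G:F]\leq N(d)$. By (R6), $G[kb]=G$, so the $\cO$-version of Clifford/Dade theory (also using Theorem~\ref{Kulshammer_O:theorem}) presents $B$ as Morita equivalent to a twisted group algebra of $G/F$ over the basic algebra of $b$, with twisting $2$-cocycle valued in a group of roots of unity of order bounded by a function of $d$. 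Choosing $n$ to be a common multiple of $r_1(d)$, $N(d)!$ and the cocycle exponent, a strong Frobenius isomorphism $b\to b^{(p^n)}$ lifts to $B\to B^{(p^n)}$ realising $\chi\mapsto\chi^{(p^n)}$ on characters, giving $\SOF(B)\leq r(d)$.

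I expect the main obstacle to be the lifting step (iii): one must simultaneously trivialise, by a single bounded power of Frobenius, the Galois action on the strong isomorphism of $b$, on the irreducible characters of the extension $G/F$, and on the values of the twisting cocycle. These three are intertwined through the cyclotomic structure of the character field and the Clifford data, and the argument rests on showing that all of them live in a cyclotomic extension of $\QQ_p$ of bounded degree (depending only on $d$), so that a bounded power of Frobenius fixes them all.
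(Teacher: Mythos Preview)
The paper does not give a proof of this lemma; it simply quotes it as Lemma~3.5 of \cite{el18b}. So there is no argument in the present paper to compare against directly. Your three-step outline (bound $\SOF$ on $F^*(G)$, control the extension $G/F^*(G)$, then lift) has the right overall shape, and step~(i) is essentially sound: (R4) does force $t\leq d$, and the block of $F^*(G)$ is, up to central data, a tensor product of blocks of the $L_i$.

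Step~(ii), however, has a genuine gap. The Schreier conjecture says only that $\Out(S)$ is \emph{solvable} for a finite simple group $S$; it gives no bound on $|\Out(S)|$, and indeed field-automorphism groups of groups of Lie type have unbounded order. Your assertion that ``each $\Out(L_i)$ acting faithfully on a block of defect at most $p^d$ is solvable of order bounded in terms of $d$'' is therefore not a consequence of Schreier, and you have not otherwise justified it. What (R6) actually gives is that the image of $G/F^*(G)$ in $\Out(kb)$ is trivial (every element of $G$ acts as an inner automorphism on $kb$); it does not say that the image of $G/F^*(G)$ in $\Out(F^*(G))$ is small. Converting the former into a bound on $[G:F^*(G)]$ would require showing that the kernel of $\Out(L_i)\to\Out(kb_i)$ has order bounded in terms of the defect of $b_i$, which is a substantive statement you have not proved. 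Without the bound $[G:F]\leq N(d)$, the factor $N(d)!$ in step~(iii) is unavailable and your lifting argument does not go through.

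Two further points on step~(iii). First, Theorem~\ref{Kulshammer_O:theorem} is purely a finiteness statement about Morita equivalence classes; it does not provide the crossed-product or twisted-group-algebra description of $B$ over $b$ that you invoke, so that citation does not do the work you assign it. Second, the machinery that actually carries this step in \cite{el18b} is the collection of reduction properties of $\SOF$ established in \cite{el18a}: the strong $\cO$-Frobenius number was engineered precisely so that, under the hypothesis $G[kb]=G$, one can pass from $\SOF(b)$ to $\SOF(B)$ directly, with the $p$-part of the extension controlled by $|D|=p^d$ and the $p'$-part handled without needing $[G:F^*(G)]$ as an input. You should locate and use those propositions rather than attempting to bound the index via Schreier.
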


The remainder of the proof of Theorem \ref{reduce:theorem} now consists of observing that bounding the strong $\cO$-Frobenius numbers for reduced pairs implies a bound on the number of Morita equivalence classes amongst reduced pairs. In~\cite{el18b}, this part of the reduction could only be achieved over $k$ since it relied on the results of~\cite{ke04}. The results of the previous section remedy this.

{\noindent \emph{Proof of Theorem \ref{reduce:theorem}.}}

Suppose that there is a function $s:\mathbb{N}\to\mathbb{N}$ such that $\SOF(b) \leq s(d)$ for all $\cO$-blocks $b$ of quasisimple groups with abelian defect group of order $p^d$. By Lemma \ref{SOF_for_reduced_pairs} $\SOF(B)$ is bounded in terms of $D$ for all reduced pairs $(G,B)$ where $B$ is a block of $\cO G$ with abelian defect group $D$. We have assumed that the Cartan invariants of the blocks of quasisimple groups with abelian defect groups are bounded in terms of the defect. Then by~\cite[Theorem 3.2]{du04} the Cartan invariants of any block with abelian defect groups are bounded in terms of the defect, and so in particular this holds for blocks $B$ for finite groups $G$ such that $(G,B)$ is reduced. Hence by Theorem \ref{kessar_analogue} there are only finitely many Morita equivalence classes amongst blocks in reduced pairs with defect group isomorphic to $P$  and by Proposition \ref{duvel} we are done.{\hfill $\Box$   \hskip - \parfillskip\bigskip}

\begin{cor}
\label{Farrell-Kessar_consequence}
Let $P$ be a finite abelian $p$-group. Suppose that there is a function $c:\NN \rightarrow \NN$ such that if $G$ is a quasisimple finite group and $B$ is a block of $kG$ with abelian defect groups $D$ of order $p^d \leq |P|$, then the entries of the Cartan matrix of $B$ are at most $c(d)$. Then Donovan's conjecture holds for $\cO$-blocks with defect groups isomorphic to $P$.
\end{cor}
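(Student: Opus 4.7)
The plan is to combine Theorem \ref{reduce:theorem} with the results of Farrell and Kessar \cite{fk18}. Theorem \ref{reduce:theorem} reduces Donovan's conjecture for $\cO$-blocks with abelian defect groups of order $p^d$ to the existence of two functions: one bounding the strong $\cO$-Frobenius number $\SOF(B)$ of blocks of quasisimple groups with abelian defect of order dividing $p^d$, and one bounding the Cartan invariants of such blocks. The hypothesis of this corollary already provides the function $c$ bounding the Cartan invariants, so the only thing missing is a function $s$ bounding $\SOF(B)$ for quasisimple blocks.

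First I would invoke \cite{fk18}: Farrell and Kessar prove that the strong $\cO$-Frobenius number of a block of a quasisimple finite group is bounded by a function depending only on the order of its defect group. In particular, for each non-negative integer $d'$ there is a constant $s(d')$ such that every $\cO$-block $B$ of a quasisimple group with abelian defect group of order $p^{d'}$ satisfies $\SOF(B) \leq s(d')$. This is exactly the function required by the hypothesis of Theorem \ref{reduce:theorem}.

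Finally I would write $|P|=p^d$ and apply Theorem \ref{reduce:theorem} with the functions $s$ (supplied by \cite{fk18}) and $c$ (supplied by the hypothesis of the corollary). The theorem then yields that there are only finitely many Morita equivalence classes of $\cO$-blocks with abelian defect groups of order $p^d$, and in particular only finitely many Morita equivalence classes with defect groups isomorphic to $P$, which is precisely Donovan's conjecture for $P$.

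There is no real obstacle here; the corollary is a straightforward consequence of combining the main theorem of the paper with the input from \cite{fk18}. The substantive work lies in Theorem \ref{reduce:theorem} (which rests on Corollary \ref{kessar_analogue}, Proposition \ref{duvel}, Lemma \ref{SOF_for_reduced_pairs}, and the lifting of Külshammer's result in \cite{eis18}) and in the quasisimple bounds of \cite{fk18}; the corollary itself is just the bookkeeping step that feeds the Farrell--Kessar bound into the reduction.
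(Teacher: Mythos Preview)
Your proposal is correct and follows essentially the same approach as the paper: combine Theorem~\ref{reduce:theorem} with the Farrell--Kessar bound on $\SOF(B)$ for quasisimple blocks. The paper's proof additionally records the explicit bound $\SOF(B)\leq 4|D|^2!$ and a one-line remark that this bound, proved in~\cite{fk18} over the ring of Witt vectors, transfers to any larger $\cO'$ with the same residue field via $\operatorname{sf}_{\cO'}(B\otimes_\cO \cO')\leq \SOF(B)$.
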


\begin{proof}
This follows from Theorem \ref{reduce:theorem} and~\cite{fk18}, in which it is proved that $\SOF(B) \leq 4|D|^2!$ for all blocks $B$ of quasisimple finite groups with defect groups $D$. Note that the setting of~\cite{fk18} is that $\cO$ is the ring of Witt vectors for $k$. However for $\cO'$ a complete discrete valuation ring containing $\cO$ with $\cO'/J(\cO') \cong k$, we have ${\rm sf_{\mathcal{O}'}}(B \otimes_\cO \cO') \leq \SOF(B)$.
\end{proof}

\begin{cor}
Donovan's conjecture holds for $\cO$-blocks whose defect groups are abelian $2$-groups.
\end{cor}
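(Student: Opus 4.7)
The plan is to combine Corollary~\ref{Farrell-Kessar_consequence} with the Cartan invariant bound of~\cite{ekks14}. Specifically, fix an abelian $2$-group $P$. By Corollary~\ref{Farrell-Kessar_consequence}, in order to prove Donovan's conjecture for $\cO$-blocks with defect groups isomorphic to $P$, it suffices to exhibit a function $c:\NN\to\NN$ such that whenever $G$ is a finite quasisimple group and $B$ is a $2$-block of $kG$ with abelian defect group of order $2^{d} \leq |P|$, every entry of the Cartan matrix of $B$ is bounded above by $c(d)$.

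Such a bound is precisely what is supplied by~\cite{ekks14}, where it is shown that for every $2$-block of an arbitrary finite group with abelian defect groups, the Cartan invariants are bounded in terms of the order of the defect group. In particular this bound applies to the (more restrictive) class of $2$-blocks of quasisimple groups with abelian defect groups, yielding the required function $c$.

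Putting the two ingredients together, Corollary~\ref{Farrell-Kessar_consequence} concludes that Donovan's conjecture holds for $\cO$-blocks with defect groups isomorphic to $P$. Since $P$ was an arbitrary abelian $2$-group, the corollary follows. There is no genuine obstacle here: the work has already been carried out in Theorem~\ref{reduce:theorem}, the Farrell--Kessar bound on strong $\cO$-Frobenius numbers, and the Cartan bound from~\cite{ekks14}; the present corollary is simply their combination.
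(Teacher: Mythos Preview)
Your proof is correct and follows exactly the paper's own approach: invoke Corollary~\ref{Farrell-Kessar_consequence} and supply the required Cartan bound from~\cite{ekks14} (specifically~\cite[9.2]{ekks14}). There is nothing to add.
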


\begin{proof}
This follows immediately from Corollary \ref{Farrell-Kessar_consequence} and~\cite[9.2]{ekks14}, in which it is proved that the Cartan invariants of $2$-blocks with abelian defect groups are bounded in terms of the defect.
\end{proof}

\begin{rem}
\label{O-remark}
Note that if $\cO\subseteq \cO'$ then $\operatorname{sf}_{\cO'}(B\otimes \cO')\leq \SOF(B)$ for some $\cO$-block $B$. Therefore all the results of this section hold for $\cO$ equal to the ring of Witt vectors of $k$ adjoining a primitive $|P|^{\nth}$ root of unity, where we are considering blocks with defect group isomorphic to $P$. This is a very common and natural choice of ring to work over as it guarantees that $e_\chi\in KB$ for all $\chi\in\Irr(B)$.
\end{rem}


\section{Bounding Cartan invariants}
\label{Cartan}

We are left with the difficult problem of finding a bound on the Cartan invariants of blocks of quasisimple groups in terms of the defect group, so we gather together some (known) comments on the problem. In general, there are few $p$-groups for which a bound on the Cartan invariants is known but Donovan's conjecture is not known to hold. The generalised quaternion $2$-groups are an exception, where Donovan's conjecture is still not known in the case where the block has two simple modules, but the Cartan matrix is known (see~\cite{er90}). Following~\cite{du04}, for $G$ a finite group and $B$ a block of $kG$ with defect group $D$, write $\LL(B)$ for the Loewy length of $B$, the smallest $n$ such that $\rad^n(B)=0$. Write $$e(B)=\max\{\dim_k({\rm Ext}^1_{kG}(V,W)):V,W \ {\rm simple} \ kG{\rm -modules} \}.$$ The largest Cartan invariant of $B$ is at most
$$\sum_{i=0}^{\LL(B)} e(B)^il(B)^i$$ and by the Brauer-Feit theorem $l(B) \leq |D|^2$, so bounding the Cartan invariants in terms of the defect group is equivalent to bounding $\LL(B)$ and $e(B)$ in terms of the defect group.

By~\cite[Theorem 3.4]{du04} if $Z \leq O_p(G)$ and $\overline{B}$ the unique block of $kG/Z$ corresponding to $B$, we have $\LL(B) \leq \LL(\overline{B}) \LL(kZ)$ and $e(B) \leq e(\overline{B})+e(kZ)$, so in order to prove Donovan's conjecture it now suffices to bound the Cartan invariants for blocks of quasisimple groups with no non-trivial central $p$-subgroup.

Bounds for the Loewy length in terms of the defect group have been considered in~\cite{el17} for abelian defect groups, although bounds are only demonstrated for $p=2$ and so do not contribute anything to our knowledge of Donovan's conjecture.

The question of bounding $\dim_k({\rm Ext}^1_{kG}(V,W))$ for simple $B$-modules $V$ and $W$ is related to a conjecture of Guralnick in~\cite{gu86} where it is predicted that there should be an \emph{absolute} bound when $V$ is the trivial module and $W$ is faithful. In~\cite{ps11} it is shown that for finite groups of Lie type in defining characteristic $\dim_k({\rm Ext}^1_{kG}(V,W))$ is bounded in terms of the size of the root system, with no restrictions on $V$ and $W$. Therefore, since all blocks of non-trivial defect are of full defect for finite groups of Lie type in defining characteristic, there is a bound in terms of the size of the defect group in this case.

\begin{ack*} We thank both Gunter Malle and the anonymous referee for their careful reading of the manuscript and for their helpful comments and suggestions.
\end{ack*}


\begin{thebibliography}{99}

\bibitem{asc00} M.~Aschbacher, \emph{Finite group theory}, Cambridge University Press (2000).

\bibitem{du04} O.~D\"{u}vel, \emph{On Donovan's conjecture}, J. Algebra {\bf272} (2004), 1--26.

\bibitem{el17} C.~W.~Eaton and M.~Livesey, \emph{Loewy lengths of blocks with abelian defect groups}, Proc. AMS Ser. B {\bf4} (2017), 21--30.

\bibitem{el18a} C.~W.~Eaton and M.~Livesey, \emph{Towards Donovan's conjecture for abelian defect groups}, J. Algebra {\bf519} (2019), 39-61.

\bibitem{el18b} C.~W.~Eaton and M.~Livesey, \emph{Donovan's conjecture and blocks with abelian defect groups}, Proc. AMS {\bf147} (2019), 963--970.

\bibitem{ekks14} C.~W.~Eaton, R.~Kessar, B.~K\"ulshammer and B.~Sambale, \emph{$2$-blocks with abelian defect groups}, Adv. Math. {\bf254} (2014), 706--735.

\bibitem{eis18} F.~Eisele, \emph{The Picard group of an order and K\"ulshammer reduction,} available arXiv:1807.05110.

\bibitem{er90} K.~Erdmann, \emph{Blocks of tame representation type and related algebras}, Lecture Notes in Mathematics {\bf1428}, Springer-Verlag (1990).

\bibitem{fk18} N.~Farrell and R.~Kessar, \emph{Rationality of blocks of quasi-simple finite groups}, available arXiv:1805.02015v1.

\bibitem{gu86} R.~M.~Guralnick, \emph{The dimension of the first cohomology group}, in Representation Theory, II (Ottawa, 1984), 94--97. Lecture Notes in Math. {\bf1178}, Springer-Verlag (1986).

\bibitem{gt11} R.~M.~Guralnick and Pham Huu Tiep, \emph{First cohomology groups of Chevalley groups in cross characteristic}, Annals of Mathematics {\bf174} (2011), 543--559.

\bibitem{ke04} R.~Kessar, \emph{A remark on Donovan's conjecture}, Arch. Math (Basel) {\bf82} (2005), 391--394.

\bibitem{kr66} M.~Krasner, \emph{Nombre des extensions d'un degr\'{e} donn\'e d'un corps $p$-adique}, Les Tendances G\'{e}om. en Alg\`{e}bre et Th\'{e}orie des Nombres, pages 143--169. Editions du Centre National de la Recherche Scientifique, Paris (1966).

\bibitem{ku95} B.~K\"ulshammer, \emph{Donovan's conjecture, crossed products and algebraic group actions}, Israel J. Math. \textbf{92} (1995), 295--306.

\bibitem{kp90} B.~K\"ulshammer and L.~Puig, \emph{Extensions of nilpotent blocks}, Invent. Math. \textbf{102} (1990), 17--71.

\bibitem{ple83} W.~Plesken, \emph{Group rings of finite groups over $p$-adic integers}, Lecture Notes in Mathematics
	\textbf{1026}, Springer-Verlag, Berlin (1983)

\bibitem{ps11} B.~J.~Parshall and L.~L.~Scott, \emph{Bounding $Ext$ for modules for algebraic groups, finite groups and quantum groups}, Adv. Math. {\bf226} (2011), 2065--2088.

\bibitem{re75} I.~Reiner, \emph{Maximal orders}, Academic Press Inc. (1975).

\bibitem{wa94} A.~Watanabe, \emph{On nilpotent blocks of finite groups}, J. Algebra \textbf{163} (1994), 128--134.

\end{thebibliography}
\end{document}